\newtheorem{thm}{Theorem}[section]
\newtheorem{lemma}[thm]{Lemma}
\newtheorem{prop}[thm]{Proposition}
\newcommand{\R}{\mathbb{R}}
\newcommand{\Z}{\mathbb{Z}}
\begin{document}
\title{Lens rigidity with trapped geodesics in two dimensions}
\author[C.~Croke]{Christopher B. Croke$^+$} \address{ Department ofbb:
Mathematics, University of Pennsylvania, Philadelphia, PA 19104-6395
USA} \email{ccroke@math.upenn.edu} \thanks{$+$Supported in part by NSF grant DMS 10-03679 and an Eisenbud Professorship at M.S.R.I.}
\author[P.~Herreros]{Pilar  Herreros$^{\dagger}$} \address{Mathematisches Institut, University of M\"unster, 48149 M\"unster, Germany} \email{p.herreros@uni-muenster.de}
 \thanks{$^\dagger$Supported in part by an M.S.R.I. Postdoctoral Fellowship.}

\keywords{Scattering rigidity, Lens rigidity, trapped geodesics}

\begin{abstract}
We consider the scattering and lens rigidity of compact surfaces with boundary that have a trapped geodesic.  In particular we show that the flat cylinder and the flat M\"obius strip are determined by their lens data.  We also see by example that the flat M\"obius strip is not determined by it's scattering data.  We then consider the case of negatively curved cylinders with convex boundary and show that they are lens rigid.
\end{abstract}

\maketitle

\section{Introduction}

In this paper we consider the lens and scattering rigidity of a number of compact surfaces with boundary that have a trapped geodesic.  A trapped geodesic ray is a geodesic $\gamma(t)$ which is defined for all $t\geq 0$, while a trapped geodesic is one defined for all $t$.  We will call a unit vector trapped if it is tangent to a geodesic ray while we call it totally trapped if it is tangent to a trapped geodesic.

We will consider compact two dimensional manifolds $(M,\partial M,g)$ with boundary $\partial M$ and metric $g$.
Let $U^+\partial M$ represent the space of inwardly pointing unit vectors at the boundary.  That is, $v\in U^+\partial M$ means that $v$ is a unit vector based at a boundary point and $\langle v,\eta^+\rangle\geq 0$, where $\eta^+$ is the unit vector of $M$ normal to $\partial M$ and pointing inward.  $U^-\partial M$ will represent the outward vectors.  These spaces are two dimensional while $U^+\partial M\cap U^-\partial M=U(\partial M)$ the unit tangent bundle of $\partial M$ is one dimensional.

For $v\in U^+\partial M$ let $\gamma_v(t)$ be the geodesic with $\gamma'(0)=v$.  We let $TT(v)\in [0,\infty]$ (the travel time) be the first time $t>0$ when $\gamma_v(t)$ hits the boundary again.  If $\gamma_v(t)$ never hits the boundary again then $TT(v)=\infty$, while if either $\gamma_v(t)$ does not exist for any $t>0$ or there are arbitrarily small values of $t>0$ such that $\gamma(t)\in \partial M$, then we let $TT(v)=0$.  Note that $TT(v)=0$ implies that $v\in U(\partial M)$ while for $v\in U(\partial M)$, $TT(v)$ may or may not be $0$.

The scattering map ${\emph{S}}:U^+\partial M\to U^-\partial M$ takes a vector $v\in U^+\partial M$ to the vector $\gamma'(TT(v))\in U^-\partial M$.  It will not be defined when $TT(v)=\infty$ and will be $v$ itself when $TT(v)=0$. If another surface $(M_1,\partial M_1,g_1)$ has isometric boundary to $(M,\partial M,g)$ in the sense that $(\partial M,g)$ ($g$ restricted to $\partial M$) is isometric to $(\partial M_1,g_1)$ (i.e. they have the same number of components - circles - with the same lengths), then we can identify $U^+\partial M_1$ with $U^+\partial M$ and $U^-\partial M_1$ with $U^-\partial M$.  We say that $(M,\partial M,g)$ and $(M_1,\partial M_1,g_1)$ have the same scattering data if they have isometric boundaries and under the identifications given by the isometry they have the same scattering map.  If in addition the travel times $TT(v)$ coincide then they are said to have the same lens data.

A compact manifold $(M,\partial M,g)$ is said to be scattering (resp. lens) rigid if for any other manifold $(M_1,\partial M_1,g_1)$ with the same scattering (resp. lens) data there is an isometry from $M_1$ to $M$ that agrees with the given isometry of the boundaries.

In this paper we prove three such rigidity results.

\begin{thm}
\label{flatcylinder}
The flat cylinder $[-1,1]\times S^1$ is lens rigid.
\end{thm}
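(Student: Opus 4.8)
The plan is to show that any $(M_1,\partial M_1,g_1)$ with the same lens data as the flat cylinder $C=[-1,1]\times S^1$ is isometric to $C$, by first putting $M_1$ into Fermi coordinates based at one boundary component and then using the full scattering data to identify the metric in those coordinates with the flat one.

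First I would transcribe the features of the lens data of $C$ that $M_1$ must share. Write $\ell$ for the length of $S^1$. The boundary $\partial M_1$ is two circles of length $\ell$, which we parametrize by arclength via the given boundary isometry; call the components $A_1,B_1$. From $C$ one reads off: (i) every non-tangent $v\in U^+\partial M_1$ has $TT(v)<\infty$ and $\gamma_v$ runs from one component to the other; (ii) $TT(v)$, the exit angle, and the boundary displacement depend only on the angle $\alpha\in(-\pi/2,\pi/2)$ that $v$ makes with the inward normal, with $TT=2/\cos\alpha$, exit angle $\alpha$, and displacement $2\tan\alpha$; (iii) hence $\min TT=2$, attained exactly at the vectors normal to $\partial M_1$, which scatter to normal vectors with zero displacement; (iv) $TT(v)\to\infty$ as $v$ tends to a tangent vector, at the rate $TT\sim 2/\varepsilon$ in the angle $\varepsilon$ to the boundary, which forces each boundary circle to have zero geodesic curvature, i.e.\ to be totally geodesic. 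I would also argue that (i) forces $M_1$ to be a topological cylinder, since a compact surface with two boundary circles and positive genus carries geodesic arcs running from a component back to itself, contradicting (i).

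Next, for each $\theta$ let $c_\theta\colon[0,2]\to M_1$ be the geodesic leaving the point $\theta$ of $A_1$ orthogonally; by (iii) it reaches $B_1$ orthogonally at time $2$, at the matched point $\theta$, with no displacement. A first-variation argument shows the distance between $A_1$ and $B_1$ is attained by an orthogonal geodesic, which by the data has length $2$; hence each $c_\theta$ is minimizing and has no focal point of $A_1$ on $(0,2)$, so the normal exponential map of $A_1$ is nonsingular up to time $2$. Using this, the topological fact that $M_1$ is a cylinder, and the minimality of the $c_\theta$, I would show $F(t,\theta):=c_\theta(t)$ is a diffeomorphism $[0,2]\times S^1\to M_1$, and reparametrize by $t=s+1$ to get $F\colon C\to M_1$. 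In the Fermi coordinates $(t,\theta)$ the metric pulls back to $dt^2+f(t,\theta)^2\,d\theta^2$ with $f(0,\cdot)=f(2,\cdot)=1$ and $\partial_t f(0,\cdot)=\partial_t f(2,\cdot)=0$ by (iv), and $\partial_t^2 f=-K_1 f$ along each $c_\theta$, where $K_1$ is the Gauss curvature of $M_1$. It then remains to prove $f\equiv1$, i.e.\ $K_1\equiv0$. For this one must use (ii) for non-orthogonal $v$: the geodesics of $M_1$ issuing from $A_1$ at every angle $\alpha$ must, in the coordinates $(t,\theta)$, travel from $\{t=0\}$ to $\{t=2\}$ with length $2/\cos\alpha$ and $\theta$-displacement $2\tan\alpha$. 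I would argue that these are the minimizers between their endpoints, so that the full $A_1$--$B_1$ distance function of $M_1$ coincides with that of $C$; feeding this into the geodesic equations for $dt^2+f^2d\theta^2$ — together with Santal\'o's formula, which gives $\mathrm{Vol}(M_1)=2\ell$ once one checks that the trapped interior set is negligible — should force $f\equiv1$, so that $g_1$ is flat and $F$ is an isometry extending the boundary isometry.

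The step I expect to be the main obstacle is exactly this last one: squeezing flatness out of the scattering data. A priori $M_1$ carries no curvature bound, and because it has trapped interior geodesics we cannot invoke the rigidity theory for simple or SGM manifolds; so the argument must show by hand that the non-orthogonal boundary-to-boundary geodesics are minimizing (hence determine the boundary distance), control the geodesic family as $\alpha\to\pm\pi/2$ where it degenerates toward the would-be trapped circles, and verify that the interior trapped set of $M_1$ has measure zero so that Santal\'o's formula applies. Establishing the topological claim that $M_1$ is a cylinder, and that the width is uniformly $2$, are the other places where the argument needs care.
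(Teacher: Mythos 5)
Your setup matches the paper's: the normal geodesics from one boundary circle are minimizing, pairwise disjoint, and cover $M_1$, giving Fermi coordinates in which $g_1=dt^2+f(t,\theta)^2d\theta^2$ (your $f$ is the paper's Jacobi field $j$), and Santal\'o's formula plus lens equivalence gives $\mathrm{Area}(M_1)=\mathrm{Area}(M)$ once the trapped set is negligible. But the step you yourself flag as the ``main obstacle'' is exactly where the proof lives, and your plan for it does not work. Area equality together with $f=1$ on the boundary does not force $f\equiv 1$, and ``the boundary distance function of $M_1$ coincides with that of $C$, feed this into the geodesic equations'' is not an argument --- it amounts to assuming a boundary-rigidity statement for the cylinder, which is essentially the theorem being proved. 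The missing idea is that lens equivalence controls not just travel times but the \emph{derivative} of the scattering map, hence forces a correspondence of Jacobi fields along each normal geodesic; as in \cite{Cr91} this is equivalent to the identity $\int_0^1 j^{-2}(t,\theta)\,dt=1$. Convexity of $x\mapsto x^{-2}$ then gives $\int_0^1 j\,dt\ge 1$, i.e.\ $\mathrm{Area}(M_1)\ge\mathrm{Area}(M)$, with equality iff $j\equiv 1$. Combined with the Santal\'o equality this pins down $j\equiv1$ pointwise. Without some such pointwise mechanism your outline cannot conclude.

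A second, smaller gap: you correctly note that Santal\'o's formula requires the trapped set of $M_1$ to have measure zero, but you give no argument for it. This is not routine --- it is the content of the paper's Section \ref{intersections}, which compares geometric intersection numbers $i$ and $i_1$ of corresponding boundary-to-boundary geodesics (showing $i_1=i+n$ with $n\ge0$ via a transversality argument on the space $\Gamma$ of non-trapped geodesics) and then applies a Crofton-type formula to deduce both $L(\gamma)\le L(\gamma_1)$ and, from the equality of lengths, that $TG(\gamma_1(t))$ has measure zero along almost every such geodesic. Several of your preliminary claims (the $TT\sim 2/\varepsilon$ asymptotics forcing the boundary to be totally geodesic, the minimality of the non-orthogonal boundary-to-boundary geodesics) are also asserted rather than proved, but they are not needed once the Jacobi-field identity is in hand.
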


\begin{thm}
\label{flatmobius}
The flat M\"obius strip is lens rigid.
\end{thm}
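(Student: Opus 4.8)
The plan is to reduce the M\"obius strip to the cylinder by passing to the orientation double cover, and then invoke Theorem \ref{flatcylinder}. Concretely, suppose $(M_1,\partial M_1,g_1)$ has the same lens data as the flat M\"obius strip $N$. The first step is to extract from the lens data the fact that $\partial M_1$ is a single circle of the correct length, and that through every boundary point there is exactly one trapped direction: the trapped set of $N$ consists of a single closed geodesic (the core circle, traversed once around, of length equal to the central circumference), and the asymptotic behavior of $TT(v)$ as $v$ approaches a totally trapped vector — together with the scattering map on the non-trapped vectors — should pin down that the trapped geodesic of $M_1$ is likewise a single simple closed geodesic whose complement is an annulus, forcing $M_1$ to be a M\"obius strip topologically. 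This is the step I expect to be the main obstacle: one must rule out, purely from lens data, that $M_1$ is (say) a cylinder or a surface of higher genus with a boundary circle, and one must see that the trapped geodesic is one-sided rather than two-sided.

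Once the topology and the one-sidedness of the core geodesic are established, I would form the orientation double covers $\tilde N$ and $\tilde M_1$. Here $\tilde N$ is the flat cylinder $[-1,1]\times S^1$ (with the circumference doubled relative to the M\"obius core), and $\tilde M_1$ is a connected double cover of $M_1$ with two boundary circles, each mapping isometrically by a $2$-to-$1$ cover to $\partial M_1$. The key point is that lens data lifts: a geodesic in $M_1$ entering at $v\in U^+\partial M_1$ and exiting at $S(v)$, with travel time $TT(v)$, lifts to a geodesic in $\tilde M_1$ with the corresponding lifted endpoints and the same travel time; the deck transformation intertwines everything. So $\tilde M_1$ and $\tilde N$ have isometric boundaries and the same lens data, provided one checks the identification of the boundary covers is canonical — and it is, because the two boundary circles of $\tilde N$ are exchanged by the deck involution, matching the single boundary circle of $N$ traversed twice.

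Then Theorem \ref{flatcylinder} applies: there is an isometry $\Phi\colon \tilde M_1\to \tilde N$ extending the given boundary isometry. The final step is to descend $\Phi$ to an isometry $M_1\to N$. For this one must show $\Phi$ is equivariant with respect to the deck involutions $\sigma_1$ on $\tilde M_1$ and $\sigma_N$ on $\tilde N$, i.e. $\Phi\circ\sigma_1=\sigma_N\circ\Phi$; then $\Phi$ passes to the quotient. Equivariance on the boundary is automatic from how the covers were set up, and since $\Phi\circ\sigma_1$ and $\sigma_N\circ\Phi$ are both isometries of the flat cylinder agreeing on the boundary (and both orientation-reversing or both orientation-preserving in the same way), they must coincide — this is where the rigidity of the flat cylinder, or simply the classification of its isometries, is used a second time. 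I would also double-check the degenerate possibility that $\Phi\circ\sigma_1$ and $\sigma_N\circ\Phi$ differ by a nontrivial isometry of $\tilde N$ fixing the boundary; the flat cylinder has no such isometry, so this is ruled out. Modulo the topological identification in the first paragraph, the rest is bookkeeping about covers.
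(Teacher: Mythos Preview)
Your overall architecture---pass to the orientation double cover and invoke Theorem~\ref{flatcylinder}---is exactly the paper's, and your handling of the lift of lens data and the descent of the isometry via equivariance is correct (indeed more explicit than the paper, which simply says ``pass to the two fold covers''). The gap is precisely where you flag it: establishing that $M_1$ is non-orientable.

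Your proposed attack on that step rests on a false premise. In the flat M\"obius strip $N$ there are \emph{no} trapped directions at boundary points: in the strip model every inward-pointing geodesic from $\partial N$ is a straight line with nonzero transverse component and exits in finite time. The core circle is totally trapped but lies in the interior; no geodesic ray from $\partial N$ is asymptotic to it. So there is no ``asymptotic behavior of $TT(v)$ as $v$ approaches a totally trapped vector'' to exploit---$TT$ blows up only near the boundary-tangent directions, and that carries no orientability information. Worse, the capped-cylinder example in the introduction has the same scattering data as $N$ and is orientable, so any argument that does not use the travel times must fail; your trapped-set sketch does not visibly use them.

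The paper's device is different and is what you are missing. It uses the \emph{perpendicular} geodesics, which in $N$ join each boundary point $s$ to the point $s+L$ half-way around (here $2L$ is the boundary length) and are pairwise disjoint. The intersection-counting of Section~\ref{intersections} gives $i_1=i+n$ with $n\ge 0$; if $n\ge 1$ then Lemma~\ref{crofton} forces $L(\gamma_1)>L(\gamma)$ for every $\gamma$, contradicting the lens hypothesis. Hence $n=0$ and the perpendicular geodesics in $M_1$ are also pairwise disjoint. Now cut $M_1$ along the perpendicular geodesic $\gamma^0$ from $0$ to $L$: the nearby perpendicular geodesic $\gamma^\epsilon$ is disjoint from $\gamma^0$ but joins $\epsilon\in(0,L)$ to $L+\epsilon\in(L,2L)$, i.e.\ connects the two complementary boundary arcs. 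So $M_1\setminus\gamma^0$ is connected, $\gamma^0$ is one-sided, and $M_1$ is non-orientable. With that in hand, your double-cover argument goes through verbatim.
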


\begin{thm}\label{negative curvature}
A cylinder with negative curvature and convex boundary is lens rigid.
\end{thm}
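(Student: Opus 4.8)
Throughout let $(M,\partial M,g)$ be the negatively curved cylinder with strictly convex boundary $\partial M=\partial_1\sqcup\partial_2$, and let $(M_1,\partial M_1,g_1)$ have the same lens data, identified with $(M,\partial M)$ along the given boundary isometry. The plan is to use negative curvature to split $M$ along its core geodesic, to reconstruct each half from the part of the lens data visible on one boundary circle, and then to glue the two reconstructions. First I would record the structure of such cylinders. Minimizing length in the free homotopy class of the core circle produces a closed geodesic $\gamma_0$: a minimizing sequence cannot run off to the convex boundary, and negative curvature forces uniqueness. Moreover $\gamma_0$ is the only trapped geodesic, since any complete geodesic not meeting $\partial M$ must, by convexity, stay interior for all time and, being confined and free of conjugate points, be forward and backward asymptotic to $\gamma_0$; so the totally trapped vectors are exactly those asymptotic to $\gamma_0$. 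Thus $\gamma_0$ separates $M$ into annuli $A^+$ (between $\partial_1$ and $\gamma_0$) and $A^-$ (between $\partial_2$ and $\gamma_0$). A further structural fact I would use: the deck transformation of the cylinder translates along the axis of $\gamma_0$ in the universal cover, so all lifts of $\gamma_0$ coincide; hence by the no-conjugate-point (geodesic-bigon) argument every geodesic segment of $M$ with endpoints on $\partial M$ meets $\gamma_0$ at most once. In particular every $\partial_1$-to-$\partial_1$ geodesic stays in $A^+$, every $\partial_2$-to-$\partial_2$ geodesic stays in $A^-$, and a $\partial_1$-to-$\partial_2$ geodesic crosses $\gamma_0$ exactly once.

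Next I would extract the data common to $M$ and $M_1$. The trapped set $\{v\in U^+\partial M:\ TT(v)=\infty\}$ is read directly from the lens data, and approaching it along a sequence $v_n$ the geodesics $\gamma_{v_n}$ spiral into $\gamma_0$; normalizing the successive travel-time differences $TT(v_{n+1})-TT(v_n)$ by the number of windings (itself detectable from the way the scattering map turns as $v$ crosses the trapped set) gives $\mathrm{length}(\gamma_0)$ in the limit. Consequently $M_1$ also carries a unique closed geodesic $\gamma_0^{1}$ with $\mathrm{length}(\gamma_0^{1})=\mathrm{length}(\gamma_0)$, separating $M_1$ into halves $A_1^+,A_1^-$ whose convex boundary circles are matched to $\partial_1,\partial_2$ by the given boundary isometry.

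The heart of the argument is to show $A^+$ is determined by the part of the lens data carried by $\partial_1$. The geodesics issuing from a point $p\in\partial_1$ range from short ones returning quickly to $\partial_1$ up to, as the angle increases to the trapping angle $\theta_c(p)$, ones penetrating all the way to $\gamma_0$, and as $(p,\theta)$ varies these foliate $A^+$. For $\theta<\theta_c(p)$ the geodesic returns to $\partial_1$, so its endpoints and length appear in the lens data of $M$; for $\theta\ge\theta_c(p)$ the $A^+$-portion runs from $\partial_1$ to $\gamma_0$, and its endpoint/time data is recovered by continuity from the returning geodesics together with $\mathrm{length}(\gamma_0)$. Knowing this foliated family with its endpoint and travel-time data determines the metric on $A^+$ up to isometry fixing $\partial_1$ — I would establish this by a Muhometov-type integral-geometry estimate adapted to this foliation, or equivalently by peeling the metric off layer by layer from $\partial_1$ inward, which is legitimate precisely because there are no conjugate points. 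The same argument from $\partial_2$ gives an isometry $A^-\cong A_1^-$ fixing $\partial_2$. Finally I would glue: both isometries send $\gamma_0$ to $\gamma_0^{1}$, and they agree there because the common scattering data for $\partial_1$-to-$\partial_2$ geodesics pins down the point at which each such geodesic meets $\gamma_0$, which is exactly the relative twist of the two halves; hence the glued map is a well-defined isometry $M_1\to M$ agreeing with the boundary identification.

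I expect the main obstacle to be the reconstruction of $A^+$: unlike a simple surface, $A^+$ is an annulus with a geodesic — hence not strictly convex — boundary circle along which geodesics accumulate, so the standard two-dimensional boundary and lens rigidity theorems do not apply directly, and one must justify carefully both that the admissible geodesics genuinely foliate $A^+$ (controlling possible caustics near $\gamma_0$) and that the limiting, trapping data is continuous in the relevant sense. A secondary technical point is making the winding-number bookkeeping in the travel-time asymptotics precise enough to conclude $\mathrm{length}(\gamma_0^{1})=\mathrm{length}(\gamma_0)$ and to identify the gluing twist.
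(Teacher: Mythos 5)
There are genuine gaps here, and they sit exactly where the real work of the theorem lies. First, you repeatedly endow $M_1$ with structure that must be \emph{proved}, not assumed: $M_1$ is only known to have the same lens data as $M$, so a priori you do not know it is a cylinder, that it has no conjugate points, that its trapped set is small, or that it contains a unique closed geodesic $\gamma_0^1$ separating it into two annuli. Recovering the number $\mathrm{length}(\gamma_0)$ from travel-time asymptotics (even granting that bookkeeping) does not produce such a geodesic in $M_1$, let alone one with the separating property your decomposition requires. The paper has to earn the analogous facts: it shows $\pi_1(M_1)=\Z$ via a minimax/no-conjugate-points argument, proves trapped directions have measure zero by an intersection-counting Crofton argument, and even then never claims $M_1$ has a unique closed geodesic --- it works entirely on universal covers with Otal's average-angle method.

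Second, the step you yourself flag as the obstacle --- reconstructing the half-annulus $A^+$ from the $\partial_1$ part of the lens data --- is not a known theorem and is essentially as hard as the original problem. $A^+$ has one boundary component ($\gamma_0$) that is totally geodesic, with geodesics from $\partial_1$ spiralling into it; every layer-stripping or Muhometov/Pestov--Uhlmann argument requires a foliation by strictly convex curves, which degenerates precisely at $\gamma_0$. Invoking ``a Muhometov-type estimate adapted to this foliation'' is naming the difficulty, not resolving it. Third, the gluing step is circular: the point at which a $\partial_1$-to-$\partial_2$ geodesic of $M_1$ crosses $\gamma_0^1$ is an interior point of that geodesic and is not readable from the lens data without already knowing the metric of $M_1$; in $M$ you can compute it only because you know $g$. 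The paper sidesteps all three issues by a completely different route: after establishing no conjugate points and the measure-zero trapped set, it transports Otal's marked-length-spectrum machinery (the intersection-angle function $\bar\theta$, the averaged angle $\Theta$, Jensen's inequality forcing $\Theta=\mathrm{Id}$, and the induced incidence-preserving map $f:\tilde M_1\to\tilde M$) to the boundary setting, never needing to reconstruct either half-annulus separately.
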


The higher dimensional version of theorem \ref{flatcylinder} was proved recently \cite{Cr11} by the first author.  In that paper it was shown that for $n\geq 2$, $D^n\times S^1$ is scattering rigid where $D^n$ represents the unit disc in $\R^n$.  This was the first example of such a rigidity theorem that had trapped geodesic rays (however \cite{St-Uh09} has a local rigidity result that includes trapped geodesic rays).  The two dimensional case has a number of differences from the higher dimensional case.  Although it is possible to approach Theorem \ref{flatcylinder} with methods as in \cite{Cr11} there are a number of complications.  In particular, the boundary is neither connected nor does the second fundamental form have a positive eigenvalue.  Here we use a different approach entirely, which is very two dimensional and also allows us to prove the other results.   We should note that in the two dimensional case we do not prove scattering rigidity, but only lens rigidity.  We see by example (see below) that the flat M\"obius band is not scattering rigid (at least if one allows $C^1$ metrics) while the other two cases are still open.

\begin{figure}
\label{cylinders}
% Requires \usepackage{graphicx}
\includegraphics[width=50mm]{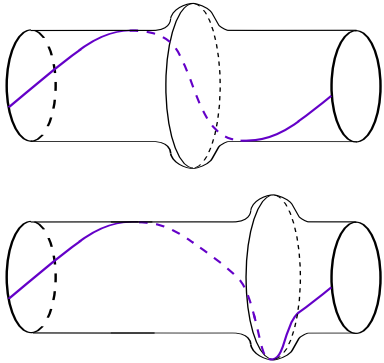}
\caption{Not isometric but same scattering and lens data.}

\end{figure}

The fact that not all manifolds are scattering rigid was pointed out in \cite{Cr91}.  For $\frac 1 4 >\epsilon>0$ let $h(t)$ be a small smooth bump function which is 0 outside $(-\epsilon ,\epsilon)$ and positive in $(-\epsilon,\epsilon)$.   For $s\in (-1+2\epsilon,1-2\epsilon)$ consider surfaces of revolution $g_s$ with smooth generating functions $F_s(t)=1+h(s+t)$ for $t\in [-1,1]$.  These surfaces of revolution look like flat cylinders with bumps on them that are shifted depending on $s$ but otherwise look the same (see figure \ref{cylinders}).  The Clairaut relations show that, independent of $s$, geodesics entering one side with a given initial condition exit out the other side after the same distance at the same point with the same angle.  Hence all metrics have the same scattering data (and in fact lens data) but are not isometric.  A much larger class of examples was given in section 6 of \cite{Cr-Kl94}.

\bigskip

We now present an example that shows that the flat M\"obius band is not scattering rigid.
Let $C$ be the cylinder $[0,l]\times S^1$ and let $H$ be a hemisphere attached to
$C$ by identifying the equator with the the curve $l\times S^1$. We get $M_1=C\cup H/\sim$ where $\sim$ is the identification above.  Note that $M_1$ is topologically a disc.

We need to understand some of the geodesics on $M_1$. Observe that any geodesic in the cylinder
that reaches $l\times S^1$ forming an angle $\alpha$ with it goes into $H$, where it is a great
circle that leaves $H$ again at its antipodal point forming the same angle $\alpha$. From the point
of view of the cylinder, any geodesic that leaves it through a point $(l,\theta)$ comes back at the
point $(l,\theta+\pi)$ with the same angle. Thus, the scattering data of $M_1$ is the same as that of $M_0$; the cylinder with one boundary identified to itself via the antipodal map. I.e. $M_0$ is a flat M\"obius band. Therefore, the scattering data of $M_0$ and $M_1$ are the same, but the travel times are different.  In fact they differ by exactly $\pi$.

\begin{figure}
\label{caps}
% Requires \usepackage{graphicx}
\psfrag{M}{\hspace*{-0.2cm}$M_0$}
\psfrag{C}{$C$}
\psfrag{Hl}{$H$}
\psfrag{a}{\footnotesize{$\alpha$}}
\includegraphics[width=80mm]{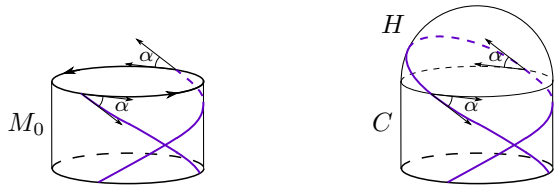}
\caption{Same scattering but not lens data.}

\end{figure}
 All known examples of nonisometric spaces with the same lens or scattering data have in common that there are trapped geodesics.

\bigskip

The scattering and lens rigidity problems are closely related to other inverse problems.  In particular, the boundary rigidity problem is equivalent to the lens rigidity question in the Simple and SGM cases.  See \cite{Cr91} and \cite{Cr04} for definitions and relations to some other problems.  There is a vast literature on these problems (see for example \cite{Be83,Bu-Iv06,Cr91,Cr90,Gr83,Mi81,Mu77,Ot90-2,Pe-Sh88,Pe-Ul05}).  In particular, it was shown in \cite{Pe-Ul05} that Simple two dimensional compact manifolds are boundary rigid (hence lens and scattering rigid).  The Simple condition however precludes trapped geodesic rays.

\bigskip

The main issue in the proofs of all the Theorems in this paper is to show that the space of trapped geodesics has measure $0$.  We will get at this by counting intersections of geodesics and applying a version of Crofton's formula.  We do this in Section \ref{intersections}.

We prove Theorems \ref{flatcylinder} and Theorem \ref{flatmobius} in section \ref{flatcase} using rigidity arguments developed in \cite{Cr91} and \cite{Cr-Kl98}.  Theorem \ref{negative curvature} is proved in section \ref{negativecase} using a rigidity method developed by Otal in \cite{Ot90-1,Ot90-2}).

\section{Counting Intersections}
\label{intersections}

In this section we discuss how to use a version of Crofton's formula to show that trapped geodesics have measure $0$.

We begin with the general case of two $2$-dimensional manifolds $M$ and $M_1$ with the same boundary and the same scattering data.

The space of geodesics that start at the boundary can be parametrised by their initial vector in $U^+\partial M$. For $s\in \partial M$ and $\theta\in [-\frac \pi 2,\frac \pi 2]$ let $\gamma_{(s,\theta)}$ be the geodesic starting at $s$ that makes an angle $\theta$ with the inward direction.  The Liouville measure on the space of geodesics leaving the boundary can be represented as $|\cos(\theta)| d\theta ds$, where $ds$ represents the arclength along the boundary.  In fact, Santal\'o's formula (see chapter 19 of \cite {Sa76}) tells us that this is true for any curve $\tau$ in $M$.  Namely, if we parametrise the geodesics passing through $\tau$ by arclength $dt$ along $\tau$ and angle $\phi$ made with a chosen normal, then the Liouville measure will be $|\cos(\phi)| d\phi dt$.  Of course $\gamma_{(s,\theta)}$ might intersect the curve $\tau$ many times (or not at all). Let $i(\tau,s,\theta)$ be the geometric number of times that $\gamma_{(s,\theta)}$ intersects $\tau$.  Also let $G(\tau(t))$ be the subset of the unit vectors at $\tau(t)$ that are tangent to geodesics that started at a boundary point.  The above gives us the following version of Crofton's formula (which works in both $M$ and $M_1$):
$$\int_{\partial M} \int_{-\frac\pi 2}^{\frac\pi 2} i(\tau,s,\theta) |\cos(\theta)| d\theta ds  = \int_\tau \int_{G(\tau(t))} |\cos(\phi)| d\phi dt.$$

We will let $\Gamma$ (resp $\Gamma_1$) be the space of non-trapped geodesics that are not tangent to the boundary at either endpoint.  $\Gamma$ can be parameterized as an open subset of the unit vectors $U^+\partial M$ on the boundary pointing inward.  $\Gamma_1$ can be identified with $\Gamma$ by this parametrization.  We will consider the corresponding intersection functions $i(\gamma,\tau)$ and $i_1(\gamma_1,\tau_1)$ which map $\Gamma\times \Gamma-Diag$ to the nonnegative integers via the geometric intersection number (i.e. the number of intersection points) of the geodesics $\gamma$ and $\tau$ (respectively $\gamma_1$ and $\tau_1$), where $\gamma$ and $\tau$ are distinct non trapped geodesics (running from boundary point to boundary point) of $M$ and $\gamma_1$ and $\tau_1$ are the corresponding geodesics in $M_1$.  We will show that these functions are closely related.  They need not be the same though as the counter example to scattering rigidity for the M\"obius strip has $i_1 = i+1$.

\begin{lemma}
Let $\gamma$, $\tau^0$ and $\tau^1$  be distinct elements of $\Gamma$ such that $\tau^0$ and $\tau^1$ are in the same component of $\Gamma$.  Then
$$i(\gamma,\tau^0)-i_1(\gamma_1,\tau_1^0)=i(\gamma,\tau^1)-i_1(\gamma_1,\tau_1^1).$$
\end{lemma}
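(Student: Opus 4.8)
The plan is to show that the quantity $i(\gamma,\tau)-i_1(\gamma_1,\tau_1)$ is locally constant as $\tau$ varies within a fixed component of $\Gamma$ with $\gamma$ held fixed; since a locally constant function on a connected set is constant, evaluating at $\tau^0$ and $\tau^1$ gives the claim. So it suffices to work in a neighborhood of an arbitrary $\tau\in\Gamma$ and understand how $i(\gamma,\tau)$ and $i_1(\gamma_1,\tau_1)$ can jump as we perturb $\tau$ slightly to a nearby $\tau'$.

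First I would fix a compact arc of $\gamma$ and a compact arc of $\tau$ (both away from the boundary at their interior, or rather controlled at the endpoints). The geometric intersection number $i(\gamma,\tau)$ can change only when the perturbation of $\tau$ either (a) creates or destroys a transverse intersection point by sliding an intersection off one of the endpoints of $\gamma$ or $\tau$ at the boundary $\partial M$, or (b) changes a tangential intersection into two transverse ones or into none. Since $\gamma$ and $\tau$ are non-trapped geodesics running from boundary to boundary, for generic $\tau'$ near $\tau$ the intersections are transverse and in the interior, so the only genuine source of a jump in $i$ is an intersection point passing through $\partial M$ — i.e., an intersection point of $\gamma$ with $\tau'$ that occurs exactly at an endpoint of $\gamma$ (where $\gamma$ meets $\partial M$) as $\tau'$ crosses $\tau$. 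The key geometric observation is that such an event is detected purely by boundary data: near an endpoint of $\gamma$, the geodesic $\tau$ is in the region swept by the family of geodesics emanating from near that boundary point, and whether $\tau$ passes to one side or the other of the endpoint of $\gamma$ on $\partial M$ is something recorded by the scattering map together with the position on the boundary. Because $M$ and $M_1$ have the same boundary and the same scattering data, the same crossing events happen in $M_1$ at the same parameter values, with the same sign. Hence every jump of $i(\gamma,\tau)$ as $\tau$ moves is matched by an identical jump of $i_1(\gamma_1,\tau_1)$, so the difference $i(\gamma,\tau)-i_1(\gamma_1,\tau_1)$ does not change. (Tangency events, where the two geodesics become tangent, change $i$ by an even amount $0$ or create a jump of size $2$, but these are symmetric in $M$ and $M_1$ for the same reason: tangency of $\gamma$ and $\tau$ is not visible from the boundary, yet the count change is a local phenomenon near an interior point, and the corresponding geodesic segments of $\tau$ and $\tau_1$ are being perturbed over the same parameter interval, so the parity/count behavior is the same — more carefully, one restricts attention to $\tau'$ for which all intersections with $\gamma$ are transverse, a full-measure dense set, and the function $i(\gamma,\cdot)$ restricted to this set is locally constant except across the boundary-crossing walls above.)

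I would organize the argument as: (1) identify the "walls" in $\Gamma$ (for fixed $\gamma$) across which $i(\gamma,\cdot)$ can jump, namely the set of $\tau$ having a non-transverse intersection with $\gamma$ or an intersection at a boundary point of $\gamma$ or $\tau$; (2) observe that the boundary-endpoint walls are the same subsets of $\Gamma$ (under the identification $\Gamma\cong\Gamma_1$) for $M$ and $M_1$, since they are determined by the scattering map and the boundary, and the jump of $i$ across such a wall equals the jump of $i_1$; (3) observe that across the interior-tangency walls the difference $i-i_1$ is unchanged because $i$ and $i_1$ jump by the same local amount; (4) conclude that $i(\gamma,\cdot)-i_1(\gamma_1,\cdot)$ is constant on the connected component of $\Gamma$ containing $\tau^0$ and $\tau^1$, hence equal at $\tau^0$ and $\tau^1$.

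The main obstacle I anticipate is step (2): making precise that a crossing of an intersection point through $\partial M$ is genuinely governed only by the shared scattering and boundary data, and controlling it uniformly near the endpoints of $\gamma$. One has to rule out pathologies where $\gamma$ or $\tau$ is tangent to $\partial M$ at an endpoint (excluded by the definition of $\Gamma$, but one must check the perturbed $\tau'$ stays non-tangent, or handle the measure-zero exceptional set) and one has to argue carefully that near its endpoint, $\gamma$ together with the geodesics close to it in $\Gamma$ fill a half-neighborhood in $M$ and in $M_1$ in the "same" way as seen from the boundary — this is where the identification of $U^+\partial M$ with $U^+\partial M_1$ and the equality of scattering maps does the real work. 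The tangency/count-parity bookkeeping in step (3) is routine once one works with transverse $\tau'$ only and invokes the fact that $i(\gamma,\cdot)$ is upper semicontinuous and locally constant off a codimension-one set.
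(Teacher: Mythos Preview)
Your overall strategy matches the paper's: vary $\tau$ along a path in its component of $\Gamma$, track where the intersection numbers $i(\gamma,\tau)$ and $i_1(\gamma_1,\tau_1)$ can jump, and argue that every jump is governed by boundary data and hence is the same on both sides. The paper carries this out by choosing a path $\tau^t$ transverse to the locus $End(\gamma)$ of geodesics sharing an endpoint with $\gamma$, and then observing that the sign of each $\pm 1$ jump at a crossing of $End(\gamma)$ is read off from the inward tangents and the direction of motion along $\partial M$, all of which are identified between $M$ and $M_1$.

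The one substantive gap in your write-up is step (3), the handling of interior tangencies. Your justification there (``the count change is a local phenomenon \dots\ the parity/count behavior is the same'') does not hold up: a tangency of $\gamma$ and $\tau$ in the interior of $M$ has no counterpart in $M_1$, because the interiors are not identified, so there is no reason a $\pm 2$ jump of $i$ at such a wall would be matched by any jump of $i_1$. What actually rescues the argument is much simpler and is the key observation you are missing: distinct geodesics on a Riemannian surface \emph{cannot} be tangent at an interior point, by uniqueness of solutions to the geodesic equation. Hence all interior intersections of $\gamma$ and $\tau^t$ are automatically transverse, your case (b) is vacuous, and the only walls are the boundary-endpoint ones in your step (2). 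The paper records this in a single clause (``since geodesics always intersect transversely (except at boundary points)''). Once you insert that line, your steps (1), (2), (4) become the paper's proof, and step (3) disappears.
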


\begin{proof} Since $\Gamma$ is an open subset of a $2$-dimensional manifold we can (by standard transversality arguments) choose a smooth path $\tau^t$ from $\tau^0$ to $\tau^1$ such that $\tau^t\not= \gamma$ and $\tau^t\not= -\gamma$ for any $t\in [0,1]$, and $\tau^t$ intersects transversely the subspace $End(\gamma)$ of $\Gamma$ consisting of geodesics with an endpoint in common with $\gamma$.  In particular, if an endpoint of $\tau^{t_0}$ (say $\tau^{t_0}(0)$) coincides with an endpoint of $\gamma$, then $W=\frac {d}{dt}|_{t_0} \tau^t(0)$ is not the zero vector.  Since geodesics always intersect transversely (except at boundary points) $f(t)=i(\gamma,\tau^t)$ (resp $f_1(t)=i_1(\gamma,\tau^t)$) only changes for those $t_0$'s when $\tau^{t_0}\in End(\gamma)$.  As we pass trough $t_0$ $f(t)$ and $f_1(t)$ change by exactly 1 (either plus or minus).  However the sign of the change is determined by $W$ (more precisely, the direction on the boundary determined by $W$) and the inward tangents to $\gamma$ and $\tau$ at the common boundary point.  That is, if the inward tangent to $\gamma$ lies between $W$ and the inward tangent to $\tau^{t_0}$, then both $f$ and $f_1$ increase by one and they will decrease by one otherwise.  In either case we see that $f(t)-f_1(t)$ is constant.
\end{proof}

We will apply this lemma to our various cases.  In the case of the flat M\"obius strip $\Gamma$ is connected and hence $i_1=i+n$ for some integer $n$.  However, there are geodesics $\gamma$ and $\tau$ in $M$ that don't intersect at all so $0\leq i_1(\gamma_1,\tau_1)=0+n$.  Hence $n$ is a nonnegative integer.  In the case of the flat torus $\Gamma$ has two components, but since one component is gotten from the other by reversing orientations of the geodesics, and since intersection numbers are independent of orientation, we again conclude $i_1=i+n$ where as before $n$ is a nonnegative integer.

Consider the case of a negatively curved cylinder with convex boundary with boundary components $\partial^1$ and $\partial^2$. It is straightforward to see that (up to reversing orientations) there are three components:  Those geodesic going from $\partial^1$ to $\partial^1$; those going from $\partial^2$ to $\partial^2$; and those going from $\partial^1$ to $\partial^2$.  However, for any pair of such components (including when both are the same component) we can find a geodesic from each component that do not intersect each other.  The previous argument then tells us that $i_1\geq i$.\

\bigskip
\bigskip

Our next goal is to study the measure of the set of trapped geodesics.  To that end, for a surface $M$ with boundary, we let $TG^+(x)\subset U_x$ be the set of unit vectors $v$ at $x\in M$ such that the geodesic ray in the $v$ direction never hits the boundary.  Further we define $TG^-= \{v|-v\in TG^+\}$, $TG(x)=TG^+(x)\cup TG^-(x)$ (the trapped directions), and $TTG(x)=TG^+(x)\cap TG^-(x)$ (the totally trapped directions).  We say the space of trapped geodesics has measure $0$ if the measure of $TG(x)$ is $0$ for all $x$.

\begin{lemma}
\label{crofton}
Let $M$ and $M_1$ be surfaces with the same scattering data and $\gamma\in \Gamma$.  Assume that the space of trapped geodesics in $M$ has measure $0$.  If for every $\tau\in \Gamma$ we have $i(\gamma,\tau)\leq i_1(\gamma_1,\tau_1)$ then $L(\gamma)\leq L(\gamma_1)$.  Further if $L(\gamma) = L(\gamma_1)$ then $TG(\gamma_1(t))$ has measure $0$ for almost all $t$.

\end{lemma}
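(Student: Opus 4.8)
The plan is to compare the two Crofton formulas along the geodesic $\tau$ taken to be $\gamma$ itself — that is, integrate over the space $\Gamma$ of boundary-to-boundary geodesics the intersection number $i(\gamma,\tau)$ against the Liouville measure $|\cos\theta|\,d\theta\,ds$, and do the same in $M_1$. Since $M$ and $M_1$ have the same scattering data, the parametrization of $\Gamma$ by inward vectors on the boundary gives a measure-preserving identification (the Liouville measure $|\cos\theta|\,d\theta\,ds$ depends only on the boundary, which is shared), so the left-hand sides of the two Crofton identities are integrals of $i(\gamma,\tau)$ and $i_1(\gamma_1,\tau_1)$ against \emph{the same} measure on the common parameter space. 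The right-hand side of Crofton applied with the curve $\tau=\gamma$ is $\int_\gamma \int_{G(\gamma(t))}|\cos\phi|\,d\phi\,dt$, where $G(\gamma(t))$ is the set of unit vectors at $\gamma(t)$ tangent to geodesics reaching the boundary — i.e. the complement in $U_{\gamma(t)}$ of the trapped directions $TG(\gamma(t))$ (up to the measure-zero tangent-to-boundary set). Thus
$$\int_\gamma \left(2\pi - \mu(TG(\gamma(t)))\cdot c \right)\,dt \;=\; \int_{\Gamma} i(\gamma,\tau)\, d\mathrm{Liou},$$
where I must be slightly careful about the weighting $|\cos\phi|$: the right-hand side of Crofton is $\int_\gamma\big(\int_{G(\gamma(t))}|\cos\phi|\,d\phi\big)dt$, and $\int_{S^1}|\cos\phi|\,d\phi = 4$, so when the space of trapped geodesics in $M$ has measure $0$ we get $\int_{G(\gamma(t))}|\cos\phi|\,d\phi = 4$ for a.e.\ $t$, hence the $M$-side right-hand integral equals $4L(\gamma)$.

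Now carry this out. In $M$: by the measure-zero hypothesis on trapped geodesics, for almost every $t\in[0,L(\gamma)]$ the set $G(\gamma(t))$ is the full circle minus a null set, so $\int_{G(\gamma(t))}|\cos\phi|\,d\phi = 4$, and Crofton gives $\int_\Gamma i(\gamma,\tau)\,d\mathrm{Liou} = 4L(\gamma)$. In $M_1$: Crofton gives $\int_\Gamma i_1(\gamma_1,\tau_1)\,d\mathrm{Liou} = \int_{\gamma_1}\int_{G_1(\gamma_1(t))}|\cos\phi|\,d\phi\,dt$, and since $G_1(\gamma_1(t))\subset U_{\gamma_1(t)}$ we have $\int_{G_1(\gamma_1(t))}|\cos\phi|\,d\phi \le 4$ pointwise, so the right-hand side is $\le 4L(\gamma_1)$. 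By the hypothesis $i(\gamma,\tau)\le i_1(\gamma_1,\tau_1)$ for every $\tau\in\Gamma$, the left-hand sides satisfy $\int_\Gamma i\,d\mathrm{Liou}\le \int_\Gamma i_1\,d\mathrm{Liou}$. Chaining: $4L(\gamma) = \int_\Gamma i\,d\mathrm{Liou} \le \int_\Gamma i_1\,d\mathrm{Liou} \le 4L(\gamma_1)$, giving $L(\gamma)\le L(\gamma_1)$. For the second statement, if $L(\gamma)=L(\gamma_1)$ then all inequalities are equalities; in particular $\int_{\gamma_1}\big(4 - \int_{G_1(\gamma_1(t))}|\cos\phi|\,d\phi\big)dt = 0$, and since the integrand is nonnegative it vanishes for a.e.\ $t$, forcing $\int_{G_1(\gamma_1(t))}|\cos\phi|\,d\phi = 4$, i.e.\ the complement $U_{\gamma_1(t)}\setminus G_1(\gamma_1(t))$ — which contains $TG(\gamma_1(t))$ — has measure $0$ for a.e.\ $t$.

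The main obstacle I anticipate is justifying the Crofton formula in this non-classical setting and handling the measure-zero exceptional sets cleanly: the formula as stated in the excerpt is asserted for "any curve $\tau$," but here $\tau=\gamma$ is itself a (possibly non-closed) geodesic meeting the boundary, so I need the intersection number $i(\gamma,\tau)$ to be defined and finite a.e., and I need to know that geodesics that are tangent to the boundary, trapped, or that coincide with $\gamma$ form a null set in $\Gamma$ (the first by transversality/smoothness of the boundary, the second precisely by the measure-zero hypothesis, the third trivially). I also need the identification of $\Gamma$ with $\Gamma_1$ to be measure-preserving, which follows because the Liouville measure $|\cos\theta|\,d\theta\,ds$ is intrinsic to the shared boundary data and the scattering map does not enter the measure. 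A secondary subtlety is that $i$ counts \emph{geometric} intersection points while Crofton naturally counts them with the correct a.e.\ multiplicity; since two distinct geodesics on a surface meet transversally away from the boundary, geometric and Crofton counts agree, so this is a minor point. Finally, one should double-check the normalization $\int_{S^1}|\cos\phi|\,d\phi=4$ versus whatever normalization makes the displayed Crofton identity in the excerpt consistent; the argument is insensitive to the constant as long as it is the same on both sides.
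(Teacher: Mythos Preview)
Your proof is correct and follows essentially the same approach as the paper: apply Crofton to $\gamma$ in $M$ (using the trapped-measure-zero hypothesis to get exactly $4L(\gamma)$) and to $\gamma_1$ in $M_1$ (bounding by $4L(\gamma_1)$), then chain via the intersection-number inequality and the shared Liouville measure on $\Gamma$. One cosmetic slip: the complement $U_{\gamma_1(t)}\setminus G_1(\gamma_1(t))$ equals $TG^-(\gamma_1(t))$, which is contained in (not contains) $TG(\gamma_1(t))$, but since $v\mapsto -v$ gives $|TG^-|=|TG^+|$ the conclusion $|TG|=0$ follows anyway.
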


\begin{proof}
First note that
$$4L(\gamma_1)= \int_0^{L(\gamma_1)} \int_0^{2\pi} |cos(\theta)| d\theta dt \geq \int_0^{L(\gamma_1)}\int_{G(\gamma_1(t))} |cos(\theta)|d\theta dt.$$

While Crofton's formula says
$$\int_0^{L(\gamma_1)}\int_{G(\gamma_1(t))} |cos(\theta)|d\theta dt = \int_{\Gamma} i_1(\gamma_1,\tau_1) d\tau_1\geq \int_{\Gamma} i(\gamma,\tau) d\tau=4L(\gamma).$$
In the above we used that the measures $d\tau_1$ and $d\tau$ on $\Gamma$ are the same.  In order for equality to hold not only must $i_1(\gamma,\cdot)$ and $i(\gamma,\cdot)$ coincide but $TG(\gamma_1(t))$ must have measure $0$ for almost all $t$.
\end{proof}

\section{The flat case}
\label{flatcase}

In this section we will prove Theorems \ref{flatcylinder} and \ref{flatmobius}. We will start by considering the cylinder case.  Let $M=[0,1]\times S^1$ be a flat cylinder and suppose $(M_1, \partial M_1, g_1)$ is a surface with the same lens data as $M$.

We see that the geodesics that start perpendicular to the boundary (and hence end perpendicular to the boundary) all have length $1$ and achieve the distance between the boundary components.  In particular they are minimizing geodesics, no two intersect and the union covers $M_1$ (since a shortest path from any interior point of $M_1$ to the boundary will hit the boundary perpendicularly).  Thus there is a natural diffeomorphism $F:M=\{(t,\theta)\in [0,1]\times S^1\} \to M_1$.  Along the geodesic $\gamma_{1\theta}$ of $M_1$ that starts perpendicular to the boundary at $(0,\theta)$ the vector field  $\frac d {d\theta} = j(t,\theta)N_1(t,\theta)$ (where $N_1(t)$ is the unit vector field in the $\frac d {d\theta}$ direction) is a Jacobi field perpendicular to $\gamma_{1\theta}$.  By the above
$$Area(M_1)=\int_{S^1} \int_0^1 j(t,\theta)\ dt d\theta.$$

The fact that $M_1$ has the same lens data as $M$ says that Jacobi fields along $\gamma_{1\theta}$ correspond to those along $\gamma_\theta$ in $M$ in the sense that, if some Jacobi field $J_1$ along $\gamma_{1\theta}$ has the same initial conditions (value and covariant derivative) as a Jacobi field $J$ along $\gamma_\theta$, then they also must have the same final conditions.  This being true for all Jacobi fields along $\gamma_{1\theta}$ is equivalent (see \cite{Cr91}) to
$$\int_0^1 j^{-2}(t,\theta)\ dt=\int_0^1 1\ dt=1.$$
But the convexity of $f(x)=x^{-2}$ tells us that $\int_0^1 1\ dt = \int_0^1 j^{-2}(t,\theta) dt \geq \{\int_0^1 j(t,\theta) dt\}^{-2}$ with equality if and only if $j(t,\theta)\equiv 1$.  And hence we see that
$$Area(M_1)=\int_{S^1} \int_0^1 j(t,\theta)\ dt d\theta \geq \int_{S^1} \int_0^1 1\ dt d\theta = Area(M)$$
with equality holding if and only if $j(t,\theta)\equiv 1$, i.e. $M_1$ is isometric to $M$ with the isometry being the diffeomorphism $F$ described above.  Thus we have shown

\begin{lemma}
\label{volinequality}
Let $M$ be a flat cylinder .  Then if $M_1$ is a surface with the same lens data then
$$Area(M_1)\geq Area(M)$$
with equality holding if and only if $M_1$ is isometric to $M$.
\end{lemma}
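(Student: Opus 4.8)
The plan is to reconstruct the argument that already appears in the paragraph preceding the statement, but to make explicit the two ingredients that the preceding discussion quietly used: first, that the perpendicular geodesics in $M_1$ genuinely foliate $M_1$ by minimizing arcs of length $1$, so that the diffeomorphism $F$ and the function $j(t,\theta)$ are well defined; and second, that the Jacobi-field condition $\int_0^1 j^{-2}(t,\theta)\,dt=1$ really does follow from equality of lens data. Once those are in hand, the inequality is a one-line application of the convexity of $x\mapsto x^{-2}$ integrated over the boundary circle.

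First I would set up the foliation. For each $\theta\in S^1$ the vector $v_\theta\in U^+\partial M$ perpendicular to the boundary at $(0,\theta)$ has, in $M$, travel time $TT(v_\theta)=1$ and $S(v_\theta)$ perpendicular to the other boundary component; since $M_1$ has the same lens data, the corresponding geodesic $\gamma_{1\theta}$ in $M_1$ also has length $1$ and hits $\partial M_1$ perpendicularly at both ends. Because $1$ is the distance between the boundary components (this is part of the lens data — it is the minimum of $TT$ over $U^+\partial M$, and a shortest arc from any interior point to $\partial M_1$ meets $\partial M_1$ perpendicularly), each $\gamma_{1\theta}$ is minimizing, distinct such geodesics are disjoint, and their union is all of $M_1$; this gives the diffeomorphism $F\colon[0,1]\times S^1\to M_1$. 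I would then record that $\partial/\partial\theta$ pushes forward to a Jacobi field along each $\gamma_{1\theta}$ which is perpendicular to it (perpendicularity because $\gamma_{1\theta}$ is a variation through unit-speed geodesics all of length $1$, so the variation field has vanishing tangential component up to a constant which is zero at the endpoints), and I write this field as $j(t,\theta)N_1(t,\theta)$ with $j(0,\theta)=j(1,\theta)=1$ and $j>0$ throughout since the geodesics do not intersect. The area formula $\mathrm{Area}(M_1)=\int_{S^1}\int_0^1 j(t,\theta)\,dt\,d\theta$ is then just the Jacobian change of variables under $F$.

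Next I would justify the identity $\int_0^1 j^{-2}(t,\theta)\,dt=1$. The point is that along each $\gamma_{1\theta}$ the scattering data determines the boundary map on Jacobi fields: a Jacobi field $J_1$ along $\gamma_{1\theta}$ is the variation field of a family of geodesics near $\gamma_{1\theta}$, and equality of lens data forces the derivative of the exit point, exit direction, and travel time at $v_\theta$ to agree with those of $M$; this is exactly the statement that the linearized scattering (with travel time) along $\gamma_{1\theta}$ equals that along the straight segment $\gamma_\theta$ in the flat cylinder. Following \cite{Cr91}, for a perpendicular Jacobi field written as $u(t)N_1(t)$ one has the second-order scalar Jacobi equation, and the condition that a $2$-parameter family of such fields has the same boundary behavior as in the flat case reduces, using that $jN_1$ is already one such solution with known boundary values, to $\int_0^1 j^{-2}\,dt=1$ (the second independent solution is $j(t)\int_0^t j^{-2}$, and matching its boundary data pins down the integral). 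I would state this reduction as the cited consequence of \cite{Cr91} rather than rederive it.

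Finally, the inequality: by Jensen (convexity of $f(x)=x^{-2}$ on $(0,\infty)$) applied to the probability measure $dt$ on $[0,1]$,
\[
1=\int_0^1 j(t,\theta)^{-2}\,dt=\int_0^1 f\!\bigl(j(t,\theta)\bigr)\,dt \ \ge\ f\!\left(\int_0^1 j(t,\theta)\,dt\right)=\left(\int_0^1 j(t,\theta)\,dt\right)^{-2},
\]
so $\int_0^1 j(t,\theta)\,dt\ge 1$ for every $\theta$, with equality iff $j(\cdot,\theta)\equiv 1$. Integrating over $S^1$ gives $\mathrm{Area}(M_1)\ge\mathrm{Area}(M)$, with equality iff $j\equiv 1$, i.e.\ iff $F$ is a local isometry (the metric in the $(t,\theta)$ coordinates is $dt^2+j^2\,d\theta^2$), hence an isometry agreeing with the boundary identification. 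I expect the main obstacle to be the rigorous verification that the perpendicular geodesics in $M_1$ foliate $M_1$ — in particular that no two of them meet and that they sweep out the whole surface — since this uses that $1$ is the minimal travel time and a first-variation argument for shortest connections to the boundary, rather than anything about the scattering map itself; the Jacobi-field identity is essentially quoted from \cite{Cr91}, and the convexity step is routine.
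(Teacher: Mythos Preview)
Your proposal is correct and follows essentially the same approach as the paper: the paper's proof is precisely the paragraph preceding the lemma statement, and you have faithfully reconstructed (and somewhat amplified) that argument---the perpendicular-geodesic foliation of $M_1$, the area formula $\int\!\!\int j\,dt\,d\theta$, the identity $\int_0^1 j^{-2}\,dt=1$ from \cite{Cr91}, and the convexity of $x\mapsto x^{-2}$. Your added justifications (why the perpendicular geodesics are minimizing and foliate, why the Jacobi boundary-matching reduces to the integral identity via the second solution $j\int_0^t j^{-2}$) are welcome elaborations but do not change the route.
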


On the other hand we have shown in the previous section that the set of unit vectors in $M_1$ tangent to trapped geodesic rays has measure 0. (This is of course also true of $M$.)  Now Santal\'o's formula  and the invariance of the Liuoville measure under the geodesic flow tells us that the Liouville volume of the unit tangent bundle of $M$ (resp. $M_1$) is $\int_{U^+\partial M} L(\gamma(v)) dv$ (respectively $\int_{U^+\partial M_1} L_1(\gamma_1(v)) dv$), where the measures $dv=|\cos(\theta)| d\theta ds$ on $U^+\partial M$ and $U^+\partial M_1$ are the same. Thus the lens equivalence tells us that the unit tangent bundle of $M$ has the same measure as that of $M_1$ and hence the areas are the same (see chapter 19 of \cite{Sa76}).  Thus we conclude the isometry of $M$ and $M_1$, which completes the proof of Theorem \ref{flatcylinder}.
\bigskip
\bigskip

We now consider the M\"obius strip case.  We want to do this by passing to the orientation double cover of $M$ and $M_1$ and then apply Theorem \ref{flatcylinder}.  The only real issue in doing this is to see that $M_1$ is not orientable.  (Note that in the counterexample to scattering rigidity $M_1$ is orientable.)  The key point to note is that the argument in the previous section says that the geodesics leaving the boundary perpendicularly cannot intersect (or else they would be too long).  Thus in $M_1$ going across such a geodesic and following the boundary back to the original point reverses orientation (just as in $M$).  Thus we can pass to the two fold covers to complete the proof of Theorem \ref{flatmobius}.

\section{Negative curvature}
\label{negativecase}

In this section we will prove Theorem \ref{negative curvature}.

Fix a boundary point $x\in \partial M$ and its corresponding point $x_1\in \partial M_1$.  Let $\tau:(-\infty,\infty)\to \partial M$ be the unit speed parametrization of the boundary component with $\tau(0)=x$ (which of course goes around the boundary infinitely often).  Similarly define $\tau_1$.  We let $\gamma^t$ be the geodesic segment (varying continuously in $t$) from $x$ to $\tau(t)$.  Let $\gamma^t_1$ be the corresponding geodesic segment in $M_1$.

Our first goal is to show that there are no conjugate points along any geodesic in $M_1$.  By the convexity of the boundary, for $t$ near $0$ both $\gamma^t$ and $\gamma^t_1$ are minimizing.  In particular, for small $t$ there are no conjugate points along $\gamma^t_1$.  If any such geodesic $\gamma^t_1$ has a conjugate point let $t_0$ be the first $t$ (i.e. $|t_0|$ is the smallest) where this happens. Since $\gamma^t_1$ is a smooth variation, the conjugate pair must be the endpoints.  However, the lens equivalence would imply that the endpoints are also conjugate along $\gamma^{t_0}$, but this can't happen by the negative curvature assumption.  This covers all geodesics from this boundary component to itself.  Of course a similar argument works for geodesics with both boundary points on the other component.  In fact, since we also know that a minimizing geodesic between components in $M$ will correspond to a minimizing geodesic in $M_1$ between the components, we can use a similar continuity argument to see that there are no conjugate points along the geodesics going from one component to the other.  Now, since all geodesics leaving the boundary are limits of geodesics that hit the boundary at both endpoints, we see that all geodesics that start at the boundary have no conjugate points.

Next we want to compare geodesics in the universal covers $\tilde M$ and $\tilde M_1$ of $M$ and $M_1$.  Thus the first step is to show that $M_1$ is also a cylinder, i.e. that $\pi_1(M_1,x_1)=\Z$ and is generated by going once around the boundary curve, which we assume has length $L$. Using the homotopy $H_t=\gamma^t_1\cup -\tau{[0,t]}$ from the trivial curve, it follows that the geodesics $\gamma^{nL}_1$ are homotopic to going around the boundary $n$ times.  We also know, by the convexity of the boundary, that every homotopy class is represented by some geodesic loop at $x_1$.  Thus we need only show that none of these loops are trivial in homotopy.  However, if such a geodesic loop is contractible, then a standard minimax argument would yield a geodesic loop of index 1 which is precluded by the no conjugate points result.  This allows us to conclude that universal covers $\tilde M$ and $\tilde M_1$ also have the same lens data (with the boundaries in the universal covers identified by the covering).  In particular, it now follows that all geodesics between boundary points (and hence by taking limits all geodesics with one boundary endpoint) in $\tilde M$ and $\tilde M_1$ are minimizing.  One can tell whether two geodesics in $\tilde M$ with disjoint endpoints on the boundary intersect simply by looking at the endpoints.  The endpoints will force the intersection number mod 2 to be either 0 or 1.  Since geodesics can intersect at most once they will intersect if and only if this number is 1.  But this means that the corresponding pair of geodesics in $\tilde M_1$ will intersect if and only if they do in $\tilde M$.

We will need control (locally) on the covariant derivatives of the gradient of distance functions from boundary points.  Fix $\tilde x$ in the interior of $\tilde M_1$ with $d(\tilde x,\partial \tilde M_1)=d_0$.  Choose $\frac{d_0} 4 \geq \epsilon>0$ where $\epsilon$ is less than the injectivity radius for points $\tilde z \in B(\tilde x, \frac{d_0} 2)$.  Then, by compactness, there are uniform upper and lower bounds on the geodesic curvatures of $\partial B(\tilde z, \epsilon)$.  This implies that for any $\tilde y\in \partial \tilde M_1$ the level sets of $d(\tilde y,\cdot)$ have uniformly bounded geodesic curvature at points in $B(\tilde x,\frac {d_0} 4)$. This is true since for each point $\tilde q$ on the level set and each side of the level set there is a $B(\tilde z,\epsilon)$ lying on the given side and whose boundary is tangent to the level set at $\tilde q$.  (The two $\tilde z$'s lie on the geodesic from $\tilde y$ to $\tilde q$.)  Thus there is a neighborhood of $\tilde x$ and a number C such that for all $\tilde y\in \tilde M_1$ we have $|\nabla \nabla d(\tilde y, \cdot))|\leq C$ in $B(\tilde x,\frac {d_0} 4)$.

\begin{lemma}\label{Number of trapped directions}
Let $M$ be a cylinder of negative curvature with convex boundary.  If $M_1$ is a surface with the same lens data, then for every $x$ we have $TG^+(x)$ consists of at most two vectors. (Hence $TG^-(x)$, and $TTG(x)$ consists of at most two vectors while $TG(x)$ consists of at most 4 vectors.)
\end{lemma}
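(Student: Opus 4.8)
The plan is to show that a trapped geodesic ray in $M_1$ is, up to a compact perturbation, a ray that winds around the cylinder, and that two such rays emanating from the same point $x$ in ``opposite rotational senses'' (clockwise versus counterclockwise) exhaust the possibilities. More precisely, I would first use the structure established just before the lemma: $M_1$ is a cylinder, its universal cover $\tilde M_1$ has the same lens data as $\tilde M$, all geodesics with a boundary endpoint are minimizing in $\tilde M_1$, and two such geodesics in $\tilde M_1$ intersect if and only if the corresponding ones in $\tilde M$ do. Since $M$ is a negatively curved cylinder with convex boundary, it has a unique closed geodesic $c$ (the core), and the trapped geodesics of $M$ are exactly $c$ together with the rays asymptotic to $c$; in particular $TG^+(x)$ for $M$ consists of at most two vectors (one spiralling toward $c$ from each side if $x$ is off the core). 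The goal is to transfer this to $M_1$.

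\smallskip

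Here is the order of steps I would carry out. \textbf{Step 1:} Fix $x$ and suppose $v_1, v_2, v_3 \in TG^+(x)$ are three distinct trapped directions in $M_1$; let $\sigma_i$ be the corresponding geodesic rays, which never meet $\partial M_1$. Lift everything to $\tilde M_1$: choose a lift $\tilde x$ and lifts $\tilde\sigma_i$. Because $\pi_1(M_1) = \Z$ and the rays stay in the compact part (they avoid the boundary, hence stay in a compact subcylinder), each $\tilde\sigma_i$ projects to a ray that, after bounded time, winds monotonically in one of the two rotational directions around $M_1$ — I would make this precise by noting that a bounded-away-from-boundary ray on a cylinder must eventually have a well-defined winding behavior, since otherwise it would turn around and (being minimizing on long boundary-to-boundary segments, or by the intersection-pattern transfer) would have to exit through the boundary as its $M$-counterpart does. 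By pigeonhole two of them, say $\sigma_1$ and $\sigma_2$, wind in the same rotational direction. \textbf{Step 2:} Use the intersection-transfer property. In $\tilde M_1$, consider the long geodesic segments $\gamma^{nL}_1$ (boundary loops lifted) and the segments of $\tilde\sigma_1$, $\tilde\sigma_2$; I want to show $\sigma_1$ and $\sigma_2$ must actually coincide. The idea is that two distinct trapped rays from the same point, winding the same way, would in the cover give two minimizing rays from $\tilde x$ whose forward ends diverge; but comparing with $\tilde M$ (where from a given point there is at most one trapped ray winding each way, because in the flat-metric-like picture the asymptotic ray to the core is unique on each side), the intersection pattern with the family $\gamma^t_1$ forces a contradiction — two such rays in $\tilde M_1$ would be forced to cross each other the way non-asymptotic geodesics do in $\tilde M$, contradicting that geodesics through a common point $\tilde x$ meet only at $\tilde x$. \textbf{Step 3:} Conclude that $TG^+(x)$ has at most one vector per rotational direction, hence at most two total; the parenthetical statements about $TG^-$, $TTG$, $TG$ follow formally from the definitions.

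\smallskip

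The technical heart — and where I expect the main obstacle — is Step 1/Step 2: controlling the large-scale behavior of a trapped ray in $M_1$ without curvature information on $M_1$ (only the lens-data-derived facts: no conjugate points, minimizing boundary geodesics, intersection pattern inherited from $\tilde M$). The bound $|\nabla\nabla d(\tilde y,\cdot)| \le C$ established right before the lemma is presumably the tool for this: it gives uniform control on how the distance-to-boundary function behaves along the ray, so that if $\sigma_i$ stays at distance $\ge \delta$ from $\partial M_1$ one gets a definite ``drift'' estimate forcing monotone winding, and two rays that both wind the same way and start at $\tilde x$ can be shown to stay a bounded distance apart yet, by the intersection count against the $\gamma^{nL}_1$'s, cannot both be distinct minimizing geodesics. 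I would organize this as: (a) a trapped ray stays in a compact subcylinder $K$; (b) on $K$, the second-derivative bound gives that $d(\partial M_1, \sigma_i(s))$ cannot oscillate, pinning down the rotational sense; (c) the intersection-transfer plus the at-most-once-crossing property of geodesics then collapses same-sense rays from $\tilde x$ to a single one.

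\smallskip

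An alternative, possibly cleaner route I would keep in reserve: show directly that $M_1$ has a closed geodesic in the generator of $\pi_1$ with the property that every trapped ray is forward-asymptotic to it (or its reverse), using the no-conjugate-points fact and a minimax/limiting argument, and that asymptotic rays from a fixed point are unique on each side by the standard comparison that works once one knows geodesics are minimizing in $\tilde M_1$ and meet at most once. This would immediately give ``at most two'' for $TG^+(x)$. Either way, the deliverable is the same bound, and the crux remains leveraging the lens-data consequences — no conjugate points, global minimization in the cover, and the $\tilde M \leftrightarrow \tilde M_1$ intersection dictionary — as a substitute for the absent curvature hypothesis on $M_1$.
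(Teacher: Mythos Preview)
Your plan misses the actual mechanism of the proof and substitutes an argument whose key steps you yourself flag as unjustified. The paper does not classify trapped rays by a winding sense and then collapse same-sense rays. It proceeds as follows.

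\textbf{(i) Topology.} For an interior point $x$, lift to $\tilde x\in\tilde M_1$ and consider the map $\partial\tilde M_1\to U_{\tilde x}\tilde M_1$ that sends $\tilde y$ to the initial direction of the (unique, minimizing) geodesic from $\tilde x$ to $\tilde y$. This map is continuous (no conjugate points along geodesics leaving the boundary) and injective (two geodesics from $\tilde y$ cannot meet again), so its image is a pair of disjoint open arcs in the unit circle, one per boundary component of $\tilde M_1$. Hence $TG^+(x)$ is a union of at most two \emph{closed arcs}. At this stage nothing prevents these arcs from having positive length.

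\textbf{(ii) Measure.} This is the step you omit entirely. Because $M$ and $M_1$ have the same lens data, $L(\gamma)=L(\gamma_1)$ for every $\gamma\in\Gamma$; together with the inequality $i_1\ge i$ from Section~\ref{intersections}, Lemma~\ref{crofton} then forces $TG(\gamma_1(t))$ to have measure zero for almost every $t$ along each boundary-to-boundary geodesic $\gamma_1$. At such $t$ the two closed arcs in (i) degenerate to points.

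\textbf{(iii) Continuity.} The bound $|\nabla\nabla d(\tilde y,\cdot)|\le C$ is used only here: it gives uniform control on the vector fields $V_{\tilde y}=-\nabla d(\tilde y,\cdot)$, so their limits as $\tilde y$ runs off along an end of $\partial\tilde M_1$ exist and vary continuously in $\tilde x$. These limits are exactly the endpoints of the two arcs, so the arc lengths are continuous in $x$. Thus ``arcs are points for a.e.\ $t$'' becomes ``for all $t$,'' and since boundary-to-boundary geodesics cover $M_1$, the conclusion holds at every $x$.

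Your Steps~1--2 attempt to replace (ii) by a direct geometric argument, but this does not go through. The assertion that a trapped ray must eventually wind monotonically is exactly the kind of statement one cannot get without curvature control on $M_1$; the Hessian bound you invoke is for $d(\tilde y,\cdot)$ with $\tilde y$ a single boundary point, not for the distance to $\partial M_1$, and in any case bounds on second derivatives do not preclude oscillation of a geodesic. More fundamentally, the intersection-transfer dictionary was established only for geodesics with boundary endpoints, so it cannot be applied directly to two trapped rays from $\tilde x$; and without the measure-zero input there is nothing preventing $TG^+(x)$ from being a genuine arc, in which case no pigeonhole argument can reduce it to two points. The Crofton lemma is not auxiliary machinery here---it is the content of the proof, and the Hessian bound serves only the continuity step, not the main estimate.
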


\begin{proof}
Fix an interior point $x\in M_1$.  To study the set of vectors tangent to geodesics from $x$ and hitting one of the boundary components we can look to the universal cover $\tilde M_1$ (whose boundary we now know has two connected components) and a point $\tilde x$ over $x$. For each point $\tilde y$ on $\partial \tilde M_1$ there is a geodesic arc from $\tilde x$ to $\tilde y$ (since the minimizing path is never tangent to the convex boundary).  Further this geodesic is unique, for if not two geodesics leaving $\tilde y$ would intersect again - but we have shown this doesn't happen.  Thus we get a map from $\partial \tilde M_1$ to the unit circle at $\tilde x$.  The fact that the map is continuous follows from the fact that we have no conjugate points along geodesics that leave the boundary.  Thus the unit tangents to geodesics leaving $\tilde x$ and hitting $\partial \tilde M_1$ come in two disjoint open intervals (one going to each component).

Thus $TG^+(x)$ is the complement in the unit circle of two disjoint closed intervals.  We will first see that the endpoints of these intervals vary continuously.  Consider the vectors $V_{\tilde y}(\tilde x)=-\nabla d(\tilde y,\cdot)$ which are tangent to the geodesic from $\tilde x$ to $\tilde y\in \partial \tilde M_1$.  These vector fields (as $\tilde x$ varies) are continuous and in a neighborhood of $\tilde X$ have uniformly bounded covariant derivatives by the argument in the paragraph before the Lemma.  The endpoints of the intervals will be limits of the $V_{\tilde y}(\tilde x)$ as $\tilde y$ runs off to infinity along an end of the boundary.  The control we have on the derivative tells us that the vector fields $V_{\tilde y}(\tilde x)$ will converge to a continuous vector field.

Since we know that the lengths are the same as in $M$, Lemma \ref{crofton} says that along any geodesic $\gamma$ between boundary points and for almost every $t$, $TG^+(\gamma(t))$ has measure $0$ and hence consists of two vectors.  Thus by continuity this holds for all $t$.  It is straightforward to see that such geodesics cover all of $M_1$.
\end{proof}

Note that since the totally trapped geodesics have measure $0$ they are limits of geodesics that hit the boundary so also have no conjugate points.

With these preliminaries the rest of the argument closely follows the proofs in \cite{Ot90-1}.  The assumption in that paper was that both spaces have negative curvature (and no boundary). However, the proofs only use this fact on the target space, along with the facts that geodesics intersect at most once in $\tilde{M_1}$ and if geodesics intersect in $\tilde M_1$ then corresponding geodesics intersect in $\tilde M$, but we have shown these facts above.  We now outline parts of the argument here but see \cite{Ot90-1} for more details.

Consider the space $\tilde{\Gamma}$ (resp $\tilde{\Gamma}_1$) of geodesics that are not totally trapped (i.e. trapped in both directions) in $\tilde M$ (resp. $\tilde M_1$) with its standard (Liouville) measure. The scattering data gives a $\pi_1$ invariant, measure preserving, homeomorphism $\varphi$ from $\tilde{\Gamma_1}$ to $\tilde{\Gamma}$.

Let $v\in U\tilde{M_1}$ and $\theta \in (0,\pi)$, denote by $\theta v$ a $\theta$ rotation of $v$ in the same fiber. If $v$ and $\theta v$ are not totally trapped, then $\sigma_v=\varphi(\gamma_{1v})$ and $\sigma_{\theta v}=\varphi(\gamma_{1\theta v})$ are geodesics in $\tilde{\Gamma}$ that intersect at one point. Let $\bar \theta(v, \theta)$ be the angle at which $\sigma_{\theta v}$ intersects $\sigma_{v}$. We define $\bar \theta(v,0)=0$ and $\bar \theta(v, \pi)=\pi$.

\begin{lemma}
$\bar\theta$ is continuous, and can be continuously extended to $U\tilde{M_1}\times[0,\pi]$.

\end{lemma}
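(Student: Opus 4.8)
The plan is to establish continuity of $\bar\theta$ first on the open set where both $v$ and $\theta v$ are genuinely non-totally-trapped (so that $\sigma_v$ and $\sigma_{\theta v}$ are honest transverse geodesics in $\tilde M$ meeting at a single point), and then separately handle the limiting behavior as $\theta\to 0$, $\theta\to\pi$, or as $(v,\theta)$ approaches a pair involving a totally trapped direction. For the interior continuity, I would argue as follows. The map $v\mapsto \gamma_{1v}$ from $U\tilde M_1$ to $\tilde\Gamma_1$ is continuous, and $\varphi$ is a homeomorphism onto $\tilde\Gamma$, so $v\mapsto\sigma_v$ and $(v,\theta)\mapsto\sigma_{\theta v}$ are continuous into $\tilde\Gamma$. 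Two distinct geodesics in $\tilde M$ with disjoint boundary endpoints that do intersect must intersect transversely and exactly once (since $\tilde M$ has no conjugate points — indeed by the earlier discussion geodesics in $\tilde M$ meet at most once), so the unique intersection point $p(v,\theta)$ and the pair of unit tangent vectors there depend continuously on the two geodesics. The angle $\bar\theta(v,\theta)$ is then a continuous function of these two tangent vectors away from the degenerate cases, giving continuity of $\bar\theta$ on the open locus. The one subtlety to check is that, as $(v,\theta)$ varies in this open locus, the intersection point cannot escape to the boundary or to infinity; this follows because $\varphi$ is $\pi_1$-invariant and the endpoints of $\sigma_v$, $\sigma_{\theta v}$ vary continuously, so a small perturbation keeps the combinatorial "linking" of the four endpoints fixed, hence keeps the intersection point in a compact region.

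Next I would handle the extension across $\theta=0$ and $\theta=\pi$. As $\theta\to 0^+$, the geodesics $\gamma_{1\theta v}$ converge to $\gamma_{1v}$, so $\sigma_{\theta v}\to\sigma_v$ in $\tilde\Gamma$; the intersection point $p(v,\theta)$ then converges to the basepoint of $v$, and the angle between $\sigma_{\theta v}$ and $\sigma_v$ at that point tends to $0$ because the two geodesics share a limiting tangent direction. Here is where I expect to use the uniform bound on $|\nabla\nabla d(\tilde y,\cdot)|$ from the paragraph preceding Lemma~\ref{Number of trapped directions}: it controls how fast the tangent direction of $\sigma_{\theta v}$ at the moving intersection point can turn, giving a genuine limit rather than just an oscillation. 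A symmetric argument handles $\theta\to\pi^-$, matching the prescribed values $\bar\theta(v,0)=0$ and $\bar\theta(v,\pi)=\pi$.

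The main obstacle, and the place I would spend the most care, is continuity at points $(v_0,\theta_0)$ for which $\theta_0 v_0$ (or $v_0$ itself) is totally trapped — so that $\sigma_{\theta_0 v_0}$ is not defined, yet nearby $(v,\theta)$ with $\theta v$ non-totally-trapped do have a well-defined $\bar\theta(v,\theta)$, and we must show these values converge. By Lemma~\ref{Number of trapped directions} the totally trapped directions at any point form a set of at most two vectors, and (as noted just after that lemma) the totally trapped geodesics, having measure zero, are limits of geodesics hitting the boundary and so have no conjugate points; moreover the endpoints of the trapped intervals vary continuously in $\tilde x$ by the $V_{\tilde y}$ argument. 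Using this, as $\theta v\to\theta_0 v_0$ from within the non-trapped locus, the geodesics $\gamma_{1\theta v}$ converge to the totally trapped geodesic through $\theta_0 v_0$ on compact sets; applying $\varphi$ and using its continuity (extended to the measure-zero trapped set as a limit), $\sigma_{\theta v}$ converges to a well-defined geodesic in $\tilde M$, and since $\sigma_{v}$ is fixed and still meets this limit geodesic transversely at one point (again by the no-conjugate-points / intersect-at-most-once property in $\tilde M$), the intersection points and angles converge. This produces the continuous extension to all of $U\tilde M_1\times[0,\pi]$. I would remark that this is exactly the step where having measure zero trapped set — hence that trapped geodesics are limits of boundary-to-boundary geodesics with no conjugate points — is essential, and where the argument departs from Otal's by using the geometry only of the target $\tilde M$.
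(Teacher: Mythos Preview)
Your outline is reasonable, but there is a genuine gap at the key step where $\theta_0 v_0$ is totally trapped. You write that ``applying $\varphi$ and using its continuity (extended to the measure-zero trapped set as a limit), $\sigma_{\theta v}$ converges to a well-defined geodesic in $\tilde M$.'' But $\varphi$ is defined only on non-totally-trapped geodesics, and its continuous extension to the trapped set is precisely what is in question --- you are assuming the conclusion. Nothing in your argument explains why the geodesics $\sigma_{\theta v}$ in $\tilde M$ should have a single limit rather than several subsequential limits; convergence of $\gamma_{1\theta v}$ in $\tilde M_1$ does not by itself transfer across $\varphi$, since $\varphi$ is not (yet) known to extend.

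The paper closes this gap with a fact about the \emph{target} that you never invoke: since $\tilde M$ is an infinite strip of negative curvature, it contains exactly one totally trapped geodesic $\sigma_0$. As $w\to\theta_0 v_0$ through non-trapped directions, the boundary endpoint of $\gamma_{1w}$ in $\partial\tilde M_1$ runs off to infinity, hence so does the boundary endpoint of $\sigma_w$ in $\partial\tilde M$; thus any subsequential limit of $\sigma_w$ is totally trapped in $\tilde M$ and therefore equals $\sigma_0$. This forces $\sigma_w\to\sigma_0$, and one then \emph{defines} $\bar\theta(v,\theta_0)$ to be the angle between $\sigma_v$ and $\sigma_0$. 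The uniqueness of $\sigma_0$ is the missing idea. The paper also observes that $v$ and $\theta_0 v$ cannot both be totally trapped unless they span the same geodesic, since by Lemma~\ref{Number of trapped directions} distinct totally trapped geodesics in $\tilde M_1$ do not intersect; this disposes of the remaining case by swapping the roles of $v$ and $\theta v$. As a smaller matter, your appeal to the bound on $|\nabla\nabla d(\tilde y,\cdot)|$ for $\theta\to 0$ is misplaced: that estimate lives in $\tilde M_1$, whereas $\bar\theta$ is an angle measured in $\tilde M$.
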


\begin{proof}
We can parameterize $\tilde\Gamma$ by its initial vector in $U^+\partial \tilde M$, then by continuity of the geodesic flow we can see that the relation between pairs of geodesics in $U^+\partial\tilde M \times U^+\partial\tilde M$ and their intersection angle is continuous, where we consider the intersection of a geodesic with itself to have angle $0$ or $\pi$ depending on orientation. Since the same is true in $\tilde M_1$, the function $\bar\theta$ will be continuous when restricted to the set where neither $v$ nor $\theta v$ is a totally trapped direction. (If a geodesic doesn't have an initial point - i.e. is defined for all negative parameter values - and is not trapped, it will have an endpoint on the boundary and we can define $\bar\theta$ by reversing the orientation.)

Since $\tilde{M}$ is an infinite strip with negative curvature, there is only one totally trapped geodesic $\sigma_0$ in $\tilde M$. If $v\in U\tilde M_1$ is not totally trapped but $\theta_0 v\in TTG$, we extend $\bar \theta(v, \theta_0)$ to be the angle that $\sigma_v$ makes with $\sigma_0$.  Vectors $w$ converging to $\theta_0 v$ either are in $TG^-$ or $\gamma_{1w}$ will have basepoint in  $\partial \tilde M$ at a distance from $\gamma_{1v}(0)$ going to infinity. Therefore, $\sigma_{w}$ will have the same property and (if it converges) will converges to a geodesic in $TG^-$, by the same argument also in $TG^+$ therefore totally trapped.  Thus the $\sigma_w$ converges to $\sigma_0$, and our extension will be continuous.

If $\gamma_{1v}$ is totally trapped, we can reverse the roles of $v$ and $\theta v$. They can't be both totally trapped without being the same geodesic, since totally trapped geodesics can not intersect by Lemma \ref{Number of trapped directions}.

\end{proof}

Note that the equivariance of the metrics on the universal cover allows us to define $\bar \theta (v,\theta)$ for $v\in U M_1$ (rather than $U\tilde M_1$).

Define the \emph{average angle} as $$\Theta(\theta)=\frac{1}{Vol(UM_1)}\int_{UM_1}\bar \theta(v, \theta) dv$$
were $dv$ is the Liouville measure in $UM_1$.

\begin{prop}\label{Theta is additive}
$\Theta :[0,\pi]\to[0,\pi]$ is an increasing homeomorphism such that:
\begin{enumerate}
\item $\Theta$ is symmetric in $\pi-\theta$.
\item $\Theta$ is super-additive
\end{enumerate}
Moreover, if $\Theta$ is additive, the images under $\varphi$ of any three geodesics that intersect at a common point, also intersect at one point.
\end{prop}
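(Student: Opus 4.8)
The plan is to reduce the proposition to one pointwise ``triangle inequality'' for $\bar\theta$, together with the single soft fact that the fiberwise rotation $R_\theta:UM_1\to UM_1$, $v\mapsto\theta v$, preserves Liouville measure (locally $R_\theta$ is $(x,\phi)\mapsto(x,\phi+\theta)$ on $dA\,d\phi$). I would first dispatch the easy structure: $\bar\theta$ takes values in $[0,\pi]$ and, by the previous Lemma, is continuous on $UM_1\times[0,\pi]$, so $\Theta$ is continuous by dominated convergence (dominate by the constant $\pi$; $Vol(UM_1)<\infty$ since $M_1$ is compact), with $\Theta(0)=0$ and $\Theta(\pi)=\pi$ since $\bar\theta(v,0)\equiv 0$, $\bar\theta(v,\pi)\equiv\pi$. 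I would also record that $\Theta(s)>0$ for $s\in(0,\pi]$: otherwise $\bar\theta(v,s)=0$, hence $\sigma_{sv}=\sigma_v$ as oriented geodesics, for a.e.\ $v$, and injectivity of $\varphi$ would force $\gamma_{1sv}=\gamma_{1v}$, impossible as these have different initial directions. For the symmetry (1), fix $v$ and $\theta\in(0,\pi)$: rotating $\theta v$ further by $\pi-\theta$ gives $\pi v$, and $\sigma_{\pi v}=-\sigma_v$, so $\bar\theta(\theta v,\pi-\theta)$ is the angle between $\sigma_{\theta v}$ and $-\sigma_v$, i.e.\ the supplement of the angle between $\sigma_{\theta v}$ and $\sigma_v$; thus $\bar\theta(\theta v,\pi-\theta)=\pi-\bar\theta(v,\theta)$. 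Integrating over $UM_1$ and substituting $w=\theta v$ (legitimate since $R_\theta$ preserves the measure) gives $\Theta(\pi-\theta)=\pi-\Theta(\theta)$.

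For super-additivity (2), fix $v$ and $\theta_1,\theta_2>0$ with $\theta_1+\theta_2\le\pi$, with none of $v,\theta_1v,(\theta_1+\theta_2)v$ totally trapped (a full-measure condition). In $\tilde M_1$ the geodesics $\gamma_{1v},\gamma_{1\theta_1v},\gamma_{1(\theta_1+\theta_2)v}$ all pass through a common point $\tilde x$, and the middle one separates the forward rays of the other two; since $\varphi$ respects the boundary identification, the endpoints on $\partial\tilde M$ of the three images $\sigma_v,\sigma_{\theta_1v},\sigma_{(\theta_1+\theta_2)v}$ occur in the same cyclic order as those of the originals. Because geodesics in $\tilde M$ meet at most once and intersection in $\tilde M_1$ is equivalent to intersection in $\tilde M$, these images are pairwise transverse and either concurrent or bound an embedded geodesic triangle $T$ (two of the three intersection points cannot coincide unless all three do); an orientation bookkeeping, exactly as in \cite{Ot90-1}, identifies the interior angles of $T$ with $\bar\theta(v,\theta_1)$, $\bar\theta(\theta_1v,\theta_2)$ and $\pi-\bar\theta(v,\theta_1+\theta_2)$. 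Gauss--Bonnet on $T$ with $K<0$ on $\tilde M$ then gives
$$\bar\theta(v,\theta_1)+\bar\theta(\theta_1v,\theta_2)+\bigl(\pi-\bar\theta(v,\theta_1+\theta_2)\bigr)=\pi+\int_T K\le\pi,$$
i.e.\ $\bar\theta(v,\theta_1+\theta_2)\ge\bar\theta(v,\theta_1)+\bar\theta(\theta_1v,\theta_2)$, with equality precisely when $T$ degenerates to a point, i.e.\ when the three images are concurrent. Integrating over $UM_1$ and using $\int\bar\theta(\theta_1v,\theta_2)\,dv=\int\bar\theta(w,\theta_2)\,dw=Vol(UM_1)\,\Theta(\theta_2)$ yields $\Theta(\theta_1+\theta_2)\ge\Theta(\theta_1)+\Theta(\theta_2)$. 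Combined with $\Theta(s)>0$ for $s>0$, super-additivity forces $\Theta$ to be strictly increasing; a continuous strictly increasing self-map of $[0,\pi]$ fixing the endpoints is an increasing homeomorphism.

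For the last assertion, suppose $\Theta$ is additive. For admissible $\theta_1,\theta_2$ the function $D_{\theta_1,\theta_2}(v):=\bar\theta(v,\theta_1)+\bar\theta(\theta_1v,\theta_2)-\bar\theta(v,\theta_1+\theta_2)$ descends to $UM_1$, is continuous (previous Lemma), is $\le 0$ a.e.\ by the pointwise inequality above, hence $\le 0$ everywhere, and has integral $Vol(UM_1)\bigl(\Theta(\theta_1)+\Theta(\theta_2)-\Theta(\theta_1+\theta_2)\bigr)=0$; a continuous nonpositive function with zero integral vanishes identically. Hence equality holds in the triangle inequality for all $v$, and wherever $v,\theta_1v,(\theta_1+\theta_2)v$ are not totally trapped this means $\sigma_v,\sigma_{\theta_1v},\sigma_{(\theta_1+\theta_2)v}$ are concurrent; since any three geodesics of $\tilde M_1$ through a common point can, after reordering, be written in this form, the conclusion follows. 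The step I expect to be the main obstacle is the orientation bookkeeping identifying the interior angles of $T$ as $\bar\theta(v,\theta_1)$, $\bar\theta(\theta_1v,\theta_2)$, $\pi-\bar\theta(v,\theta_1+\theta_2)$: it rests on transferring the cyclic order of the six boundary endpoints from $\tilde M_1$ to $\tilde M$ via $\varphi$, and on controlling the configuration near the unique totally trapped geodesic $\sigma_0$ of $\tilde M$, to which the relevant families $\sigma_w$ converge.
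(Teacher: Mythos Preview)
Your argument is correct and is precisely the Otal argument that the paper invokes: the paper's own ``proof'' consists of the single sentence that the Proposition follows directly from the proofs in \cite[Section~2]{Ot90-1}, and you have faithfully reconstructed that argument (pointwise triangle inequality via Gauss--Bonnet in the negatively curved target, then averaging using that the fiberwise rotation preserves Liouville measure). The orientation bookkeeping you flag as the delicate step is indeed the one nontrivial verification, and it is carried out in Otal's paper exactly along the lines you indicate, by tracking the cyclic order of boundary endpoints under $\varphi$.
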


In the above $(1)$ means $\Theta(\pi-\theta)=\pi-\Theta(\theta)$ while $(2)$ means $\Theta(\theta_1+\theta_2)\geq \Theta(\theta_1)+\Theta(\theta_2)$ whenever $\theta_1+\theta_2\in [0,\pi]$.
The Proposition follows directly from the proofs in \cite[Section 2]{Ot90-1}. (Note that in that paper $\theta'$ is used instead of $\bar \theta$ and $\Theta'$ instead of $\Theta$.)
\\

Let $F:[0,\pi]\to \R$ be a continuous convex function. By Jensen inequality, for each value of $\theta$
 $$F(\Theta(\theta))\leq \frac{1}{Vol(UM_1)}\int_{UM_1}F(\bar\theta(v, \theta)) dv. $$
Integrating over $[0,\pi]$ with measure $sin(\theta) d\theta$, and using Fubini we get
$$\int_0^\pi  F(\Theta(\theta))sin(\theta) d\theta \leq \frac{1}{Vol(UM_1)}\int_{UM_1} \int_0^\pi F(\bar\theta(v, \theta)) sin(\theta) d\theta dv. $$
Let $\bar F(v)=\int_0^\pi F(\bar\theta(v, \theta)) sin(\theta) d\theta$, so
$$\int_0^\pi  F(\Theta(\theta))sin(\theta) d\theta \leq \frac{1}{Vol(UM_1)}\int_{UM_1} \bar F(v) d\theta dv. $$

\begin{lemma}
Let $(M,\partial M,g)$ and $(M_1,\partial M_1,g_1)$ be as above, and $F:[0,\pi]\to \R$ any convex function. Then
$$\int_0^\pi  F(\Theta(\theta)) sin(\theta) d\theta \leq \int_0^\pi  F(\theta)sin(\theta) d\theta.$$
\end{lemma}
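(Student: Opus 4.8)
The plan is to follow the integral‑geometric argument of \cite{Ot90-1}. It has already been shown that
$$\int_0^\pi F(\Theta(\theta))\sin\theta\,d\theta \;\le\; \frac{1}{Vol(UM_1)}\int_{UM_1}\bar F(v)\,dv ,$$
so it is enough to prove the identity
$$\int_{UM_1}\bar F(v)\,dv \;=\; Vol(UM)\int_0^\pi F(\theta)\sin\theta\,d\theta$$
together with $Vol(UM_1)=Vol(UM)$. The volume equality is the easy part: $M$ and $M_1$ have the same lens data, so $L(\gamma(v))=L_1(\gamma_1(v))$ for every $v\in U^+\partial M\cong U^+\partial M_1$; since the trapped set has measure $0$ (Section \ref{intersections}, Lemma \ref{Number of trapped directions}), Santal\'o's formula (cf. \cite{Sa76}) gives $Vol(UM)=\int_{U^+\partial M}L(\gamma(v))\,dv=\int_{U^+\partial M_1}L_1(\gamma_1(v))\,dv=Vol(UM_1)$.

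For the identity I would work on the universal covers, keeping everything $\pi_1$-equivariant, and lift the integral over $UM_1$ to one over a fundamental domain $\mathcal D_1$ in $\tilde M_1$. Write the Liouville measure on $U\tilde M_1$ as $dA(p)\,d\phi$, where $\phi$ is the angle of $v$; for the rotated vector $w=\theta v$ let $\psi=\phi+\theta$ be its angle. Then $\sin\theta\,d\theta\,d\phi=\sin\angle(v,w)\,d\phi\,d\psi$, and as $\theta$ runs over $(0,\pi)$ the pair $(v,w)$ ranges exactly once over the ordered pairs of directions at $p$ whose angular gap lies in $(0,\pi)$. Writing $g_v=\gamma_{1v}$ and letting $\beta(g_v,g_w)\in[0,\pi]$ be the angle at which $\varphi(g_v)$ meets $\varphi(g_w)$ in $\tilde M$ (this is precisely $\bar\theta(v,\theta)$ by definition), $\int_{UM_1}\bar F(v)\,dv$ equals
$$\int_{\mathcal D_1}dA(p)\iint_{(v,w)} F\big(\beta(g_v,g_w)\big)\,\sin\angle(v,w)\;d\phi\,d\psi ,$$
the inner integral running over the ordered pairs just described. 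Now comes the main point, a two-variable Santal\'o computation: on the space of crossing pairs of geodesics of $\tilde M_1$ one has $dA(p)\,d\phi\,d\psi=\sin^{-1}\!\angle(v,w)\;\mu(dg_v)\,\mu(dg_w)$, where $\mu$ is the Liouville measure on the geodesic space. Indeed $\mu(dg_v)\otimes dt$ (with $t$ the arclength along $g_v$) is exactly the Liouville measure on $U\tilde M_1$, while the conditional law of $\mu$ on the geodesics crossing $g_v$ is $\sin\angle\,dt\,d(\mathrm{angle})$ by the Santal\'o/Crofton formula recalled in Section \ref{intersections}. The two $\sin\angle$ factors cancel, and, with the crossing point constrained to lie over $\mathcal D_1$, the displayed integral becomes $\int F(\beta(g_1,g_2))\,d(\mu\times\mu)$ over the crossing pairs of geodesics of $\tilde M_1$, taken modulo the diagonal $\pi_1$-action.

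Next I would apply $\varphi\times\varphi$. The map $\varphi$ preserves $\mu$, is $\pi_1$-equivariant, and — as established above — two geodesics cross in $\tilde M_1$ exactly when their $\varphi$-images cross in $\tilde M$; hence $\varphi\times\varphi$ carries crossing pairs bijectively and measure-preservingly to crossing pairs, turning $\beta$ into the honest intersection angle $\alpha_{\tilde M}$ in $\tilde M$. Running the same change of variables backwards in $\tilde M$, and using the tautology that two geodesics leaving a common point $q$ with angular gap $\theta\in(0,\pi)$ cross only at $q$ with intersection angle exactly $\theta$, we arrive at
$$\int_{\mathcal D}dA(q)\int_0^{2\pi}d\phi\int_0^\pi F(\theta)\sin\theta\,d\theta \;=\; Vol(UM)\int_0^\pi F(\theta)\sin\theta\,d\theta ,$$
where $\mathcal D$ is a fundamental domain in $\tilde M$; this is the asserted identity. (Totally trapped directions play no role: by Lemma \ref{Number of trapped directions} there are at most two of them in each fiber, so they form a set of measure $0$, and $\bar\theta$ was set up to extend continuously across them.) The main obstacle is this integral-geometric bookkeeping — proving $dA\,d\phi\,d\psi=\sin^{-1}\!\angle\,\mu\times\mu$ on crossing pairs and carefully matching the various fundamental domains under the $\pi_1$-equivariant map $\varphi$. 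Once that is in place the rest is formal: the measure-preservation of $\varphi$ together with the cancellation of the $\sin\angle$ weight is exactly what lets the $\varphi$-distorted angle $\beta$ on the $M_1$ side be traded for the honest angle $\theta$ on the $M$ side, and running the computation forwards on $M_1$ and backwards on $M$ makes all the constants agree.
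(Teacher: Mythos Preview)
Your argument is correct, and at its core it rests on the same fact the paper uses --- that $\varphi$ preserves the Liouville measure on the space of geodesics --- but you organize the computation quite differently from the paper.

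The paper does not pass to crossing pairs or fundamental domains at all. Instead it fixes a single nontrapped geodesic $\gamma_1$ in $M_1$, with image $\gamma=\varphi(\gamma_1)$ in $M$, and observes that the map
\[
\Phi:\gamma_1\times(0,\pi)\to\gamma\times(0,\pi),\qquad (\gamma_1(t),\theta)\mapsto(\gamma(\bar t),\bar\theta(\gamma_1'(t),\theta))
\]
sends $\sin\theta\,d\theta\,dt$ to $\sin\bar\theta\,d\bar\theta\,d\bar t$ (this is just Santal\'o applied once on each side). Integrating $F\circ\bar\theta$ against this measure and using the lens equality $L(\gamma_1)=L(\gamma)$ immediately gives
\[
\frac{1}{L(\gamma_1)}\int_{\gamma_1}\bar F(\gamma_1'(t))\,dt=\int_0^\pi F(\theta)\sin\theta\,d\theta
\]
for \emph{every} nontrapped $\gamma_1$. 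Since trapped directions have measure zero, the average of $\bar F$ over $UM_1$ is then automatically this same constant, with no need for a separate verification that $Vol(UM_1)=Vol(UM)$.

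Your route --- rewriting $\int_{UM_1}\bar F$ as an integral over crossing pairs via $dA\,d\phi\,d\psi=\sin^{-1}\!\angle\;\mu\times\mu$, transporting by $\varphi\times\varphi$, and undoing the change of variables on the $M$ side --- is a valid global repackaging of the same content, and the $\pi_1$-equivariant bookkeeping with fundamental domains is handled correctly. What the paper's approach buys is economy: by localizing to one geodesic at a time it uses the lens length equality pointwise and sidesteps the fundamental-domain matching and the separate Santal\'o volume argument. What your approach buys is a more symmetric, ``pair'' formulation that makes the role of $\varphi$ as a measure-preserving correspondence of crossing configurations explicit; this is closer in spirit to how one sometimes phrases Otal's argument in the closed case.
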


It suffices to prove that $$\frac{1}{Vol(UM_1)}\int_{UM_1} \bar F(v)  dv =\int_0^\pi  F(\theta)sin(\theta) d\theta.$$

For this we will first average $\bar F$ along each nontrapped geodesic $\gamma_1$. Let $\gamma=\varphi(\gamma_1)$ then $\varphi$, which is a homeomorphism when restricted to the nontrapped geodesics, induces a homeomorphism from $\gamma_1\times(0,\pi)$ to $\gamma\times(0,\pi)$ by $\Phi(\gamma_1(t),\theta)=(\gamma(\bar t),\bar\theta(\gamma_1'(t),\theta))$, where $\gamma(\bar t)$ is the point of intersection. This sends the Liouville measure $d\lambda = sin(\theta) d\theta dt$ to $d\bar \lambda = sin(\bar\theta) d\bar\theta d\bar t$.  (Note that in the earlier sections $\theta$ represented the angle from the normal to the curve where here it represents the angle from the tangent.  This is why the measure here has a $\sin(\theta)$ while before it was $|\cos(\theta)|$ ).  Therefore
$$\frac{1}{L(\gamma_1)}\int_{\gamma_1} \bar F(\gamma_1'(t)) dt =
\frac{1}{L(\gamma_1)}\int_{\gamma_1\times(0,\pi)}  F(\bar\theta(\gamma_1'(t),\theta))sin(\theta) d\theta dt $$
$$ = \frac{1}{L(\gamma_1)}\int_{\gamma\times(0,\pi)}  F(\bar\theta)sin(\bar\theta) d\bar\theta d\bar t=\frac{L(\gamma)}{L(\gamma_1)} \int_0^\pi  F(\bar\theta)sin(\bar\theta) d\bar\theta. $$
Since the lengths of $\gamma$ and $\gamma_1$ coincide, we have that
$$\frac{1}{L(\gamma_1)}\int_{\gamma_1} \bar F(\gamma_1'(t)) dt =\int_0^\pi  F(\theta)sin(\theta) d\theta$$
along each nontrapped geodesic, and since trapped directions have measure $0$, the average over $UM_1$ is the same.

\begin{lemma}(Lemma 8 from \cite{Ot90-1})
 Let $\Theta:[0,\pi]\to[0,\pi]$ be an increasing homeomorphism such that
\begin{enumerate}
 \item  $\Theta$ is super-additive and symmetric in $\pi-\theta$.
 \item for all continuous convex function $F:[0,\pi]\to \R$
$$\int_0^\pi  F(\Theta(\theta))sin(\theta) d\theta\leq \int_0^\pi  F(\theta)sin(\theta) d\theta.$$
\end{enumerate}
Then $\Theta$ is the identity.
\end{lemma}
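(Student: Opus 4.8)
\noindent\emph{Proof plan.} The plan is to play hypothesis (2) off against itself for one well-chosen \emph{strictly} convex test function. I would take $F_0(\theta)=-\sin\theta$, which is continuous and convex on $[0,\pi]$ since $F_0''(\theta)=\sin\theta\ge 0$ there. Hypothesis (2) applied to $F_0$ reads
\[
\int_0^\pi \sin\bigl(\Theta(\theta)\bigr)\sin\theta\,d\theta\ \ge\ \int_0^\pi \sin^2\theta\,d\theta .
\]
The heart of the argument will be to prove the \emph{opposite} inequality using only hypothesis (1), namely that $\Theta$ is an increasing homeomorphism of $[0,\pi]$ which is super-additive and symmetric in $\pi-\theta$. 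Once both inequalities are in hand, equality holds, and I will extract $\Theta=\mathrm{id}$ from the equality case.

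To set up the reverse inequality I would write $\Theta(\theta)=\theta+g(\theta)$ and first record the properties of $g$: it is continuous with $g(0)=g(\pi/2)=g(\pi)=0$ (the middle one from $\Theta(\pi/2)=\pi-\Theta(\pi/2)$); it is odd about $\pi/2$, i.e. $g(\pi-\theta)=-g(\theta)$; it is itself super-additive, $g(\theta_1+\theta_2)\ge g(\theta_1)+g(\theta_2)$ whenever $\theta_1+\theta_2\le\pi$; and, since $\Theta$ is strictly increasing with $\Theta(\pi/2)=\pi/2$, one has $0<\Theta(\theta)<\pi/2$ for $\theta\in(0,\pi/2)$, hence the pointwise bound $|g(\theta)|<\pi/2$ on $(0,\pi/2)$. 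Super-additivity applied to the pair $\theta$, $\tfrac\pi2-\theta$ then gives the single structural inequality I will actually use,
\[
g(\theta)+g\bigl(\tfrac\pi2-\theta\bigr)\ \le\ g\bigl(\tfrac\pi2\bigr)=0\qquad\text{for }0\le\theta\le\tfrac\pi2 .
\]

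Next I would expand $\sin\Theta=\sin\theta\cos g+\cos\theta\sin g$, giving
\[
\int_0^\pi \sin\theta\,\sin\Theta\,d\theta=\int_0^\pi \sin^2\theta\,\cos g(\theta)\,d\theta+\int_0^\pi \sin\theta\cos\theta\,\sin g(\theta)\,d\theta .
\]
The first term is $\le\int_0^\pi\sin^2\theta\,d\theta$ because $\cos g\le 1$, so everything reduces to showing the cross term is $\le 0$. Using the oddness of $g$ about $\pi/2$, the substitution $\theta\mapsto\pi-\theta$ folds the cross term onto $[0,\tfrac\pi2]$, where it equals $\int_0^{\pi/2}\sin 2\theta\,\sin g(\theta)\,d\theta$. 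I would then pair $\theta$ with $\tfrac\pi2-\theta$: the weight $\sin 2\theta$ is $\ge 0$ on $[0,\tfrac\pi2]$ and invariant under this involution, while $\sin a+\sin b=2\sin\tfrac{a+b}2\cos\tfrac{a-b}2$ together with $a+b=g(\theta)+g(\tfrac\pi2-\theta)\le 0$ and $|a|,|b|<\tfrac\pi2$ (so $\tfrac{a+b}2\in(-\tfrac\pi2,0]$ and $\tfrac{a-b}2\in(-\tfrac\pi2,\tfrac\pi2)$) forces $\sin g(\theta)+\sin g(\tfrac\pi2-\theta)\le 0$. Averaging over the involution yields $\int_0^{\pi/2}\sin 2\theta\,\sin g(\theta)\,d\theta\le 0$, which is the reverse inequality I wanted.

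To finish, equality in $\int_0^\pi\sin\theta\sin\Theta\,d\theta=\int_0^\pi\sin^2\theta\,d\theta$ forces $\int_0^\pi\sin^2\theta\,(1-\cos g(\theta))\,d\theta=0$; since $\sin^2\theta>0$ on $(0,\pi)$ and $1-\cos g\ge 0$, this gives $\cos g(\theta)=1$ for almost every $\theta$, hence $g\equiv 0$ by continuity (using $|g|<\pi$), i.e. $\Theta=\mathrm{id}$. The main obstacle I anticipate is controlling the sign of the cross term: its integrand is genuinely not pointwise signed, and hypothesis (1) does \emph{not} make $g$ pointwise nonpositive on $[0,\tfrac\pi2]$ — only the paired, weighted statement survives. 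It is exactly there that super-additivity (rather than mere monotonicity) and the $\pi-\theta$ symmetry are used, and one must not lose track of the bound $|g|<\pi/2$, which legitimizes the product-to-sum sign computation and itself relies on $\Theta(\pi/2)=\pi/2$.
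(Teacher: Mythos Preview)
Your argument is correct. The paper itself does not prove this lemma at all: it is quoted verbatim as ``Lemma 8 from \cite{Ot90-1}'' and then immediately used in the proof of Theorem~\ref{negative curvature}, so there is no in-paper proof to compare against. What you have written is in fact essentially Otal's original argument: test hypothesis (2) with the convex function $F(\theta)=-\sin\theta$, and use super-additivity together with the $\pi-\theta$ symmetry to obtain the reverse inequality $\int_0^\pi \sin\Theta(\theta)\sin\theta\,d\theta\le\int_0^\pi\sin^2\theta\,d\theta$, forcing equality and hence $\Theta=\mathrm{id}$.

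All the steps check out. A couple of minor remarks. First, your pointwise bound $|g(\theta)|<\pi/2$ on $(0,\pi/2)$ (and, by the $\pi-\theta$ symmetry, on $(\pi/2,\pi)$) is exactly what is needed to make the product-to-sum computation honest: it guarantees $\tfrac{a+b}{2}\in(-\tfrac\pi2,0]$ and $\tfrac{a-b}{2}\in(-\tfrac\pi2,\tfrac\pi2)$, so $\sin\tfrac{a+b}{2}\le 0$ and $\cos\tfrac{a-b}{2}>0$. Second, in the equality step you only need $|g|<\pi$ (which follows trivially from $\Theta(\theta),\theta\in(0,\pi)$) to conclude $\cos g=1\Rightarrow g=0$; the sharper bound $|g|<\pi/2$ is only needed earlier, for the sign analysis of the cross term. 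Your identification of where super-additivity (as opposed to mere monotonicity) enters --- namely in $g(\theta)+g(\tfrac\pi2-\theta)\le 0$ --- is exactly right.
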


\begin{proof}[Proof of Theorem \ref{negative curvature}]

By the previous lemma $\Theta = Id$. In particular $\Theta$ is additive, so by Lemma \ref{Theta is additive} the images under $\varphi$ of any three geodesics that intersect at a point also intersect at one point. This determines a well defined map $f:\tilde{M_1} \to \tilde{M}$ that is $\pi_1$ invariant since $\varphi$ is.

Let $\gamma_1$ be a geodesic segment from the boundary to a point $x\in M_1$, and $\gamma= f(\gamma_1)$ the corresponding segment in $M$ between $\gamma_1(0)$ and $f(x)$. Since $\Phi(\gamma_1(t),\theta)=(\gamma(\bar t),\bar\theta(\gamma_1'(t),\theta))$ sends the measure $sin(\theta) d\theta dt$ to $sin(\bar\theta) d\bar\theta d\bar t$, we get

$$L(\gamma_1) =
\frac{1}{2}\int_{\gamma_1\times(0,\pi)}  sin(\theta) d\theta dt $$
$$ = \frac{1}{2}\int_{\gamma\times(0,\pi)}  sin(\bar\theta) d\bar\theta d\bar t=L(\gamma). $$
Therefore, the lengths of geodesics segments is preserved by $f$, and so it is an isometry.

\end{proof}

\end{document}